\newtheorem{note}{Note}
\newtheorem{theorem}{Theorem}
\newtheorem{definition}{Definition}
\newtheorem{proposition}{Proposition}
\newtheorem{lemma}{Lemma}
\newtheorem{remark}{Remark}
\newcommand{\bbN}{{\mathbb N}}
\newcommand{\dd}{{\rm d}}
\newcommand{\bbR}{{\mathbb R}}
\newcommand{\al}{\alpha}
\newcommand{\la}{\lambda}
\newcommand{\p}{\partial}
\newcommand{\be}{\beta}
\newcommand{\G}{\Gamma}
\newcommand{\var}{\varphi}
\newcommand{\DC}{_0^CD^\al_t}
\newcommand{\bx}{\textit{\textbf{x}}}
\begin{document}
\begin{center}\emph{}
\LARGE
\textbf{About the Convergence of a Family of Initial Boundary Value Problems for a Fractional Diffusion Equation with Robin Conditions}
\end{center}

                   \begin{center}
                  { Isolda Cardoso$^1$, Sabrina D. Roscani$^{2,3}$ and Domingo A. Tarzia$^{2,3}$\\
$^1$ Depto. Matem\'atica, ECEN, FCEIA, UNR, Pellegrini 250,  Rosario, Argentina\\
$^2$  Depto. Matem\'atica, FCE, Univ. Austral, Paraguay 1950, S2000FZF Rosario, Argentina \\
$^3$ CONICET, Argentina\\
(isolda@fceia.unr.edu.ar, sroscani@austral.edu.ar, dtarzia@austral,edu.ar)}
                   \vspace{0.2cm}

       \end{center}
      
\small

\noindent \textbf{Abstract:} We consider a family of initial boundary value problems governed by a fractional diffusion equation with Caputo derivative in time, where the parameter is the Newton heat transfer coefficient linked to the Robin condition on the boundary. For each problem we prove existence and uniqueness of solution by a Fourier approach. This will enable us to also prove the convergence of the family of solutions to the solution of the limit problem, which is obtained by replacing the Robin boundary condition with a Dirichlet boundary condition.\\

\noindent \textbf{Keywords:} fractional diffusion equation, Robin condition, Fourier approach  \\

\noindent \textbf{MSC2010: 26A33 - 35G10 - 35C10 - 35B30} \\

\section{Introduction}\label{Sec:Introduction}

\par Initial Boundary Value Problems (IBVPs) for the Fractional Diffusion Equation (FDE) have been profusely studied in the last years \cite{KuYa:2018,Lu:2010,Povs:2014,Pskhu:2003,SaYa:2011,Za:2013}. It is well known that the FDEs describe subdiffusion processes. That is, diffusion phenomena where the mean squared displacement of the particle is proportional to $t^{\al/2}$ instead of being proportional to $t^{1/2}$, as occurs when diffusion processes take place (see \cite{KlSo:2005, MeKl:2000} and references therein).

%While the field of Fractional Derivatives has been greatly developed lately, giving place to new definitions for Fractional Derivatives that each year compite with the previous ones, generating several academic discussions on which definition to use or what properties has to satisfy an integro-differential operator for being called a fractional derivative (see for example \cite{xxx}{XXXXXXXXXX} ), 

\par Let $\Omega$ be a bounded domain in $\bbR^d$ with sufficiently smooth boundary $\p\Omega$ and let $T>0$. Let $\Delta$ denote the usual Laplacian operator on $\bbR^d$. In this work we will study the IBVP associated to the FDE with the Caputo Derivative given by
\begin{equation}\label{FDE}
\DC u(\bx,t)= \Delta u(\bx,t), \qquad (\bx,t) \in \Omega \times (0,T),
\end{equation}
where $\al \in (0,1)$ and $\DC$ denotes the Caputo fractional derivative of order $\alpha$ in the $t$ variable. This is defined for every absolutely continuous function $ f \in  AC[0,T]$ by 
\begin{equation}\label{Def-Der-Cap}
\,^C_{0} D^{\alpha}_t \,f(t)= \frac{1}{\Gamma(1-\al)}\displaystyle\int^{t}_{0}(t-\tau)^{-\al} f'(\tau)\dd\tau. \end{equation}
\noindent We will adress the problem with a convective condition,  that is, a boundary condition where the incoming flux is proportional to the temperature difference between the surface of the material and an imposed ambient temperature. They are also called Robin conditions, and they involve a linear combination of Dirichlet and Neumann conditions:
\begin{equation}\label{Robin-cond-presentation}
\, -\frac{\p u}{\p n}(\bx,t)=\beta \left[u(\bx,t)-u_\infty \right],\qquad     \bx\in \p \Omega, \,  0<t<T, \end{equation}
where   $\frac{\p}{\p n}$ denotes the unitary exterior normal derivative on $\p \Omega$, $u_\infty$ is the external imposed temperature and the parameter $\be$ is the Newton coefficient of heat transfer, which is a positive constant that will play an important role in later analysis. 

\par More precisely, the IBVP for FDE in time and Robin condition with parameter $\be \in \bbR^+$ and initial data $u_0 \in \mathcal{C}(\Omega)$ that we will consider is the following 
\begin{equation}{\label{beta-IBVP}}
\begin{array}{clll}
     (i)  &  \DC u(\bx,t)= \Delta u(\bx,t), &   \bx \in \Omega, \,  0<t<T,   \,\\
     (ii)  &  u(\bx,0)=u_0(\bx), &   \bx \in \Omega \, \,\\
     (iii) &  \frac{\p u}{\p n}(\bx,t)+\beta u(\bx,t)=0 &   \bx \in \p \Omega, \,  0<t<T.                      \end{array}
                                             \end{equation}
Notice that all the physical parameters involved in heat transfer, such as density, thermal conductivity or specific heat are considered constant equal one for the sake of simplicity, and the imposed temperature $u_\infty=0$. We will refer to problem \eqref{beta-IBVP} as the $\beta-$IBVP.

\par Second-order parabolic equations in multidimensional domains, are usually treated by using variational calculus techniques (see for example 
\cite{Brezis,Evans,Ladyzehnskaya}  among several others), and one of the key tools used in the proofs (for example, proofs for existence and uniqueness) is the application of the following property
\begin{equation}\label{regla-cadena} \frac{d}{dt}|f(t)|^2=2\left(\frac{d}{dt}f(t),f(t) \right) \end{equation}
where the notation on the right hand side denotes an inner product and the left hand side has the derivative of the squared of the norm,  in an appopiated Hilbert space. The validity of property \eqref{regla-cadena} follows from the Chain Rule. However, when working with fractional derivatives in time, we cannot deduce the same rule. Indeed, an analogous version of expression \eqref{regla-cadena} for Caputo derivatives is the given in \cite[Theorem 2.1]{Za:2009} but in contrast to the simplicity of  \eqref{regla-cadena}, the left hand side contains, besides the fractional derivative term, other terms involving integrals depending on time with singular kernels. 
%That is the main reason why in this work we will not take the variational approach for obtaining the explicit solutions. 
We will, however, use some variational techniques throughout the process related to the spacial variable. Let us also recall that in \cite{Ke:2011} a unique existence result of strong solution as the linear combination of the single-layer potential, the volume potential, and the Poisson integral, is given for the same problem with Robin conditions. We will also not choose this path.

\par In our work, we will obtain existence and uniqueness of solution for each $\be-$IBVP by a Fourier approach. Luchko in \cite{Lu:2010} and Sakamoto and Yamamoto in \cite{SaYa:2011} took this approach for a more general operator than the Laplacian. They established the existence of a unique weak solution of an IBVP for the FDE \eqref{FDE} with Dirichlet boundary conditions in a bounded domain in $\bbR^d$. We will also be considering this Dirichlet problem, so we state it precisely here:
\begin{equation}\label{D-IBVP}
\begin{array}{clll}
     (i)  &  \DC u(\bx,t)= \Delta u(\bx,t), &   \bx \in \Omega, \,  0<t<T,   \,\\
     (ii)  &  u(\bx,0)=u_0(\bx), &   \bx \in \Omega \, \,\\
     (iii) &   u(\bx,t)=0 &   \bx \in \p \Omega, \,  0<t<T;                      \end{array}
                                             \end{equation}
where $\Omega$ is a bounded domain in $\bbR^d$ with Lipschitz continuous boundary $\p\Omega$. We will refer this problem as the $D-$IBVP.
 
\par As a consequence of this approach, since the proofs are based on the eigenfunction expansions, we will be able to study the asymptotic behavior of the solutions of the $\be-$IBVP when $\be$ goes to $\infty$ and compare them to the solution of the $D-$IBVP. In doing this, we will make use of the variational techniques we slighted before, and recall the works of Filinovsky in \cite{Fi:2014} and \cite{Fi:2017}. The main theorems we will prove are the following:
\begin{theorem}\label{Main1}
Let $\be>0$. If $u_0 \in  L^2(\Omega)$ then there exists a unique weak solution $u_{\be}\in C([0,T];L^2(\Omega)) \cap C((0,T];H^1(\Omega))$ to the $\be$-IBVP \eqref{beta-IBVP} such that $\DC u_{\be} \in C((0,T];L^2(\Omega))$. Moreover,  
\begin{align}\label{cota-u-beta-L2} ||u_{\be}||_{C([0,T];L^2(\Omega))} \le & C ||u_{0}||_{L^2(\Omega)}, \end{align} and we have \begin{equation}\label{series-u-beta} u_{\be}(\bx,t) =  \sum\limits_{n=1}^{\infty} (u_0,\psi_{n}(\beta;\cdot))E_{\al,1}(-\lambda_{n}(\beta)t^{\al})\psi_{n}(\beta;\bx) \end{equation} in $C([0,T];L^2(\Omega)) \cap C((0,T];H^2(\Omega)\cap H^1(\Omega))$.
\end{theorem}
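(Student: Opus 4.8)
The plan is to prove Theorem~\ref{Main1} by the classical Fourier (eigenfunction expansion) method adapted to the Caputo fractional derivative, following the strategy of Sakamoto and Yamamoto. The first step is to set up the spectral theory of the spatial operator. For fixed $\be>0$, I would consider the operator $-\Delta$ with the Robin boundary condition $\frac{\p v}{\p n}+\be v=0$ on $\p\Omega$, realized as a self-adjoint positive operator on $L^2(\Omega)$ via the bilinear form $a_\be(v,w)=\int_\Omega \nabla v\cdot\nabla w\,\dd\bx+\be\int_{\p\Omega} vw\,\dd S$ on $H^1(\Omega)$. Since $\Omega$ is bounded with smooth boundary, the embedding $H^1(\Omega)\hookrightarrow L^2(\Omega)$ is compact, so this operator has a discrete spectrum $0<\lambda_1(\be)\le\lambda_2(\be)\le\cdots\to\infty$ with an orthonormal basis of eigenfunctions $\{\psi_n(\be;\cdot)\}$ in $L^2(\Omega)$. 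This is what gives meaning to the series \eqref{series-u-beta}.

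Next I would solve the problem mode by mode. Writing $u_\be(\bx,t)=\sum_n T_n(t)\,\psi_n(\be;\bx)$ and projecting the equation \eqref{beta-IBVP}(i) onto $\psi_n$, each coefficient must satisfy the fractional ODE $\DC T_n(t)=-\lambda_n(\be)T_n(t)$ with $T_n(0)=(u_0,\psi_n(\be;\cdot))$. The unique solution of this scalar Caputo problem is $T_n(t)=(u_0,\psi_n(\be;\cdot))\,E_{\al,1}(-\lambda_n(\be)t^\al)$, where $E_{\al,1}$ is the Mittag-Leffler function; this yields precisely \eqref{series-u-beta}. The key analytic inputs here are the standard bounds on the Mittag-Leffler function, namely that $0\le E_{\al,1}(-x)\le \frac{C}{1+x}$ for $x\ge0$ and that $E_{\al,1}(-\lambda t^\al)$ is completely monotone, which control the decay of each mode uniformly.

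The third step is to establish convergence of the series in the claimed spaces and thereby verify regularity. For the $L^2$ bound \eqref{cota-u-beta-L2}, I would use $|E_{\al,1}(-\lambda_n(\be)t^\al)|\le 1$ together with Parseval's identity to get $\|u_\be(\cdot,t)\|_{L^2(\Omega)}^2=\sum_n |(u_0,\psi_n)|^2 |E_{\al,1}(-\lambda_n t^\al)|^2\le \|u_0\|_{L^2(\Omega)}^2$, and continuity in $t$ follows from dominated convergence. For the stronger statements — membership in $C((0,T];H^2(\Omega)\cap H^1(\Omega))$ and $\DC u_\be\in C((0,T];L^2(\Omega))$ — I would differentiate the series and use the Mittag-Leffler decay $\lambda_n E_{\al,1}(-\lambda_n t^\al)\le C/t^\al$ to show that, for each fixed $t>0$, the series $\sum_n \lambda_n(u_0,\psi_n)E_{\al,1}(-\lambda_n t^\al)\psi_n$ converges in $L^2(\Omega)$, which by elliptic regularity for the Robin problem places $u_\be(\cdot,t)$ in $H^2$. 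Uniform convergence on compact subintervals $[\delta,T]$ gives the stated continuity.

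Finally, uniqueness: I would show any weak solution must have Fourier coefficients satisfying the same scalar fractional ODE, whose solution is unique, so the weak solution coincides with the constructed series. The step I expect to be the main obstacle is the rigorous justification of the regularity and termwise differentiation for $t>0$ — in particular controlling the singular behavior as $t\to0^+$ when $u_0$ is merely in $L^2(\Omega)$, where the $H^2$ norm is expected to blow up like $t^{-\al}$. This requires careful, uniform-in-$t$ Mittag-Leffler estimates rather than the naive term-by-term bounds, and is where the fractional setting genuinely departs from the classical heat equation.
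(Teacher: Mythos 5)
Your proposal follows essentially the same route as the paper's proof: the spectral decomposition of $-\Delta$ with Robin conditions via the form $a_\be$, reduction to the modal fractional ODE $\DC T_n=-\lambda_n(\be)T_n$ solved by Mittag-Leffler functions, Parseval plus $E_\al(-x)\le \min\{1,C/(1+x)\}$ for the $L^2$ bound and the $t^{-\al}$-type estimates for regularity on $(0,T]$, and uniqueness by projecting an arbitrary solution onto the eigenbasis. The only cosmetic difference is that for spatial regularity the paper proves $C((0,T];H^1(\Omega))$ directly from the orthogonality of the $\psi_n(\be;\cdot)$ in the $a_\be$ inner product, whereas you pass through the $L^2$ convergence of $\sum_n \lambda_n(\be)(u_0,\psi_n(\be;\cdot))E_\al(-\lambda_n(\be)t^\al)\psi_n(\be;\cdot)$ and elliptic regularity; both are sound and the paper in fact also needs the latter series estimate to get $\DC u_\be\in C((0,T];L^2(\Omega))$.
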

\begin{theorem}\label{Main2}
The family of solutions $\left\{u_\be\right\}$ of the $\be-IBVP$ \eqref{beta-IBVP}, converges in $L^2(\Omega)$ strong to the solution $u_D$ to the $D-IBVP$ when $\be \rightarrow \infty$, for every $t\in (0,T)$.   
\end{theorem}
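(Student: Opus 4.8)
The plan is to prove the statement directly from the eigenfunction expansion \eqref{series-u-beta} of Theorem \ref{Main1}, comparing the Robin series for $u_\beta$ with the analogous Dirichlet series for $u_D$ term by term and controlling the two tails uniformly in $\beta$. Throughout I fix $t\in(0,T)$. Let $(\lambda_n^D,\phi_n)$ denote the Dirichlet eigenpairs of $-\Delta$ on $\Omega$, so that $\{\phi_n\}$ is an orthonormal basis of $L^2(\Omega)$ and, by the Fourier theory for the $D$-IBVP (Luchko, Sakamoto--Yamamoto), $u_D(\cdot,t)=\sum_{n}(u_0,\phi_n)E_{\alpha,1}(-\lambda_n^D t^\alpha)\phi_n$. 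I abbreviate $c_n(\beta)=(u_0,\psi_n(\beta;\cdot))$ and $d_n=(u_0,\phi_n)$.

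The facts on which everything rests are the Robin-to-Dirichlet spectral convergences: for each fixed $n$, as $\beta\to\infty$ one has $\lambda_n(\beta)\uparrow\lambda_n^D$ and $\psi_n(\beta;\cdot)\to\phi_n$ strongly in $L^2(\Omega)$ (after a fixed choice of sign and normalisation). The eigenvalue monotonicity and the bound $\lambda_n(\beta)\le\lambda_n^D$ come at once from the min--max characterisation applied to the Robin forms $a_\beta(u,u)=\int_\Omega|\nabla u|^2+\beta\int_{\partial\Omega}u^2\,dS$, which increase in $\beta$ and restrict to the Dirichlet form on $H_0^1(\Omega)$; the strong convergence of eigenfunctions then follows from the resulting weak convergence together with the equality of norms $\|\psi_n(\beta;\cdot)\|=\|\phi_n\|=1$. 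I expect this step to be the main obstacle. The delicate points are, first, that the limit of $\lambda_n(\beta)$ equals $\lambda_n^D$ exactly (one must show that the boundary penalty $\beta\int_{\partial\Omega}u^2$ forces the limiting eigenfunctions into $H_0^1(\Omega)$, not merely that the limit is $\le\lambda_n^D$), and second, the bookkeeping of eigenvalue multiplicities, since the Robin and Dirichlet spectra must be matched with multiplicity and the converging eigenvectors selected within each eigenspace. This is precisely the content I would extract from Filinovsky \cite{Fi:2014,Fi:2017}.

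Granting these, the completely monotone bound $0\le E_{\alpha,1}(-x)\le 1$ for $x\ge 0$, $\alpha\in(0,1)$, controls the time multipliers, and the crux is a tail estimate uniform in $\beta$. Since $\{\psi_n(\beta;\cdot)\}$ is orthonormal, Parseval gives $\sum_n c_n(\beta)^2=\|u_0\|_{L^2(\Omega)}^2$ for every $\beta$. Given $\varepsilon>0$, I first pick $N$ with $\sum_{n>N}d_n^2<\varepsilon^2$; using $c_n(\beta)\to d_n$ for the finitely many indices $n\le N$, I then obtain $B$ with $\sum_{n\le N}c_n(\beta)^2>\|u_0\|^2-2\varepsilon^2$, hence $\sum_{n>N}c_n(\beta)^2<2\varepsilon^2$, for all $\beta>B$. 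Because $E_{\alpha,1}\le 1$, both the Dirichlet tail and the Robin tail beyond order $N$ then have $L^2(\Omega)$-norm $\le\sqrt{2}\,\varepsilon$, the latter uniformly in $\beta>B$.

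Finally, writing $S_N^\beta$ and $S_N^D$ for the partial sums of order $N$ of the Robin and Dirichlet series, the triangle inequality gives
\[
\|u_\beta(\cdot,t)-u_D(\cdot,t)\|_{L^2(\Omega)}\le\|S_N^\beta-S_N^D\|_{L^2(\Omega)}+\|u_\beta(\cdot,t)-S_N^\beta\|_{L^2(\Omega)}+\|u_D(\cdot,t)-S_N^D\|_{L^2(\Omega)}.
\]
The last two terms are $\le\sqrt{2}\,\varepsilon$ by the previous step; the first is a fixed finite sum each of whose summands, $c_n(\beta)E_{\alpha,1}(-\lambda_n(\beta)t^\alpha)\psi_n(\beta;\cdot)-d_nE_{\alpha,1}(-\lambda_n^D t^\alpha)\phi_n$, tends to $0$ in $L^2(\Omega)$ as $\beta\to\infty$, by the spectral convergence of Step 2 and the continuity of $x\mapsto E_{\alpha,1}(-t^\alpha x)$. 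Therefore $\limsup_{\beta\to\infty}\|u_\beta(\cdot,t)-u_D(\cdot,t)\|_{L^2(\Omega)}\le C\varepsilon$, and letting $\varepsilon\to 0$ finishes the proof. Conceptually this is nothing but the fact that the strong resolvent convergence of the Robin Laplacians to the Dirichlet Laplacian is preserved under the bounded continuous functional calculus $x\mapsto E_{\alpha,1}(-t^\alpha x)$.
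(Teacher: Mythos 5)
Your proposal is correct and follows essentially the same route as the paper's proof: the same decomposition into a finite sum of order $N$ plus the Dirichlet and Robin tails, the same spectral inputs (convergence of the Robin eigenvalues $\lambda_k(\beta)\to\lambda_k^D$ taken from Filinovsky, and strong $L^2(\Omega)$ convergence of the eigenfunctions obtained from weak convergence), and the bound $0\le E_{\alpha}(-x)\le 1$ plus continuity of $E_{\alpha}$ for the time factors. The one place you go beyond the paper is your Parseval argument making the Robin tail estimate uniform in $\beta$: the paper simply asserts that a single $N$ can be chosen so that its inequality \eqref{ube-N2} holds, without addressing that such an $N$ a priori depends on $\beta$, whereas your identity $\sum_n c_n(\beta)^2=\|u_0\|_{L^2(\Omega)}^2$ combined with $c_n(\beta)\to d_n$ for $n\le N$ supplies exactly the missing uniformity.
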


The rest of this manuscript is structured as follows. In Section 2 some basics on fractional calculus  and Sturn-Liouville theory  related to the temporal and spacial variables respectively are given, as well as the definition of weak solution related to the Fourier approach. In Section 3 we study the $\beta-$IBVP by using the Fourier approach and Theorem \ref{Main1} is proved. Finally, the convergence of the eigenvalues and the eigenfunctions of the $u_\be$ solutions are given in Section 4, leading to convergence in $L^2(\Omega)$, for every $t\in (0,T)$, of the $u_\be$ solutions  to the $u_D$ solution when  $\be \to \infty$ as stated in Theorem \ref{Main2}. Finally, in order to illustrate the convergence result given in Section 4, we aboard the one-dimensional case and give some examples using SageMath software in Section 5.
 
\section{Preliminaries}\label{Preliminaries}
\subsection{Temporal variable: Fractional Calculus and functions involved}

\begin{definition}\label{defiIntFrac} Let $0<\al< 1$ be.    For $f \in L^1(0,T)$, we define the \textsl{fractional 
Riemann--Liouville integral of order  $\alpha$} as
$$_{0}I^{\alpha}f(t)=\frac{1}{\Gamma(\alpha)}\int^{t}_{0}(t-\tau)^{\alpha-1} f(\tau)\dd\tau. $$
\end{definition}
\begin{note} The Caputo  derivative of order $\alpha$ defined in \eqref{Def-Der-Cap} can be expressed in terms of the fractional integral of Riemann-Liouville  of order $1-\al$ by 
$$\,^C_{a} D^{\alpha}f(t)= \left[ \, _{0}I^{1-\alpha}(f')  \right] (t)=\frac{1}{\Gamma(1-\alpha)}\int^{t}_{0}(t-\tau)^{-\alpha} f'(\tau)\dd\tau, $$
 for every $t\in (0,T).$
\end{note}

\par When dealing with fractional derivatives, it is widely known the importance of the Mittag-Leffler functions and its properties. We recall them now.

\begin{definition} For every $t \in \bbR^+_0$ and $\al>0$, the \textit{Mittag-Leffler} function is defined by
\begin{equation}\label{E} E_{\al}(t)=\displaystyle\sum_{k=0}^{\infty}\frac{t^k}{\G(\a k+1)}.\end{equation}
\end{definition}

%Please observe that the parameters $\al$ and $\be$ are not necesarily the parameters that we introduced before ($\al$ being the order of the fractional derivation, constrained to the interval $(0,1)$ and $\be$ the Newton constant which is positive), and they will be clear from the context.

\begin{proposition}\label{PropsE}  For  $\al \in (0,1)$ and $\lambda\in\bbR$ the next assertions follow:
\begin{enumerate}
\item If $\lambda>0$, then $E_{\al}(-\lambda t^\al)$ is a positive decreasing function for $t\in\bbR^+_0$ such  that 
\begin{equation}\label{ML<1}
E_{\al}(-\la t^\al)\leq 1.  
\end{equation} 
Moreover, there exists a constant $C>0$ such that for every $t\in\bbR^+_0$,
\begin{equation}\label{acotaML}
E_{\al}(-\la t^\al)\leq \frac{C}{1+\la t^\al}. 
\end{equation}
\item   For every $t\in \bbR^+_0$ and $\lambda>0$,
\begin{equation}\label{ML-prop-DC} _0^CD^\al E_{\al}(-\lambda t^\al)=-\lambda E_{\al}(-\lambda t^\al).
\end{equation}  
\end{enumerate}
\end{proposition}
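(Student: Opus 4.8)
The plan is to handle the two assertions separately. Both rest on the power-series definition \eqref{E}, namely $E_\al(-\la t^\al)=\sum_{k=0}^{\infty}\frac{(-\la)^k t^{\al k}}{\G(\al k+1)}$, together with standard facts about the Caputo derivative of power functions.

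For assertion (1) I would first establish positivity and monotonicity and then deduce the two estimates. The most economical route to positivity and to the decreasing character is to invoke the complete monotonicity of the map $x\mapsto E_\al(-x)$ on $[0,\infty)$, valid for $\al\in(0,1)$, which one obtains from the integral (spectral) representation of the Mittag-Leffler function. Since $t\mapsto \la t^\al$ is increasing for $\la>0$, the composition $t\mapsto E_\al(-\la t^\al)$ is then positive and decreasing; as $E_\al(0)=1$, estimate \eqref{ML<1} is immediate. For \eqref{acotaML} I would combine the behaviour at the two ends of $[0,\infty)$: writing $x=\la t^\al$, the function $(1+x)E_\al(-x)$ is continuous, hence bounded on any compact $[0,M]$, while the classical asymptotic expansion $E_\al(-x)=\frac{1}{\G(1-\al)\,x}+O(x^{-2})$ as $x\to+\infty$ (with $\G(1-\al)>0$ because $\al\in(0,1)$) shows that $(1+x)E_\al(-x)\to \frac{1}{\G(1-\al)}$. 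Thus $(1+x)E_\al(-x)$ is bounded on all of $[0,\infty)$ by some $C>0$, which is exactly \eqref{acotaML}.

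For assertion (2) the natural approach is term-by-term Caputo differentiation of the series. Using $\DC t^{\al k}=\frac{\G(\al k+1)}{\G(\al(k-1)+1)}\,t^{\al(k-1)}$ for $k\ge 1$ together with $\DC 1=0$, the $k$-th term contributes $\frac{(-\la)^k}{\G(\al(k-1)+1)}\,t^{\al(k-1)}$; reindexing with $j=k-1$ collapses the sum to $-\la\sum_{j=0}^{\infty}\frac{(-\la)^j t^{\al j}}{\G(\al j+1)}=-\la\,E_\al(-\la t^\al)$, which is \eqref{ML-prop-DC}.

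The main obstacle — and the only step requiring genuine care — is justifying the interchange of the Caputo operator with the infinite sum in assertion (2). Since $\DC={}_0I^{1-\al}\circ\frac{d}{dt}$, this reduces to checking that the series of termwise derivatives converges uniformly on compact subsets of $(0,T]$, so that differentiation and fractional integration may both be passed inside the sum. This holds because both the original series and its differentiated counterpart are dominated, on each such compact set, by a convergent numerical series (the growth of $\G(\al k+1)$ controlling the numerators), which legitimizes a dominated-convergence argument. For assertion (1) the corresponding delicate point is the derivation of the integral representation and the asymptotic expansion, which I would cite from the standard literature rather than reprove here.
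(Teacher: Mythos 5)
Your proposal is correct, but it takes a genuinely different route from the paper, whose proof of this proposition is essentially citation-based: the paper asserts that positivity, monotonicity and \eqref{ML<1} follow ``easily from the definition,'' and then refers to \cite[Corollary 3.7]{GKMR:2014} for \eqref{acotaML} and to \cite[Theorem 4.3]{Diethelm} for \eqref{ML-prop-DC}, without writing out any argument. You instead assemble self-contained proofs: complete monotonicity of $x\mapsto E_\al(-x)$ (via the integral/spectral representation) for positivity and the decreasing property, a compactness-plus-asymptotics argument showing $(1+x)E_\al(-x)$ is bounded on $[0,\infty)$ (continuity on compacts, limit $1/\G(1-\al)$ at infinity) for \eqref{acotaML}, and justified term-by-term Caputo differentiation of the series \eqref{E} for item 2 --- the latter being essentially how the theorem the paper cites is itself proved. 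On one point your version is arguably more careful than the paper's: since the series defining $E_\al(-\la t^\al)$ alternates, positivity and monotonicity do \emph{not} drop out of the definition alone, and invoking complete monotonicity is the standard honest way to get them. What the paper's approach buys is brevity appropriate to a preliminaries section; what yours buys is a reader who need not chase references, at the cost of the limit-interchange bookkeeping. One step you should tighten: uniform convergence of the differentiated series on compact subsets of $(0,T]$ justifies classical term-by-term differentiation there, but to pass ${}_0I^{1-\al}$ through the sum you must dominate the integrand on the whole interval of integration $(0,t)$, where the termwise derivatives behave like $\tau^{\al-1}$ near the origin; the correct dominating function is $C\,\tau^{\al-1}(t-\tau)^{-\al}$, which is integrable because $\int_0^t \tau^{\al-1}(t-\tau)^{-\al}\,\dd\tau=\G(\al)\G(1-\al)$, after which dominated convergence gives the interchange and the reindexing argument concludes exactly as you wrote.
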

\begin{proof} The first part easily follows from the definition. See \cite[Corollary 3.7]{GKMR:2014} for the estimate \eqref{acotaML} and \cite[Theorem 4.3]{Diethelm} for item 2. 
\end{proof}

\subsection{Space variable: Sobolev spaces, variational formulation}

\par Let us denote the usual $L^2(\Omega)$ inner product and the usual associated norm by $(u,v)$ and $||u||_{L^2(\Omega)}$, respectively. For the standard Sobolev spaces $H^{m}(\Omega)$ we consider the inner product \begin{align*}(u,v)_{H^{m}(\Omega)} = & \sum\limits_{0\le|\sigma|\le m} (D^\sigma u,D^\sigma v) \end{align*} and the respective associated norm \begin{align*} ||u||_{H^{m}(\Omega)} = &\left( \sum\limits_{0\le |\sigma| \le m} ||D^{\sigma}u||^2_{L^{2}(\Omega)}\right)^{1/2}. \end{align*} Also, the space $H_{0}^{1}(\Omega)$ denotes the closure of $C^{\infty}_{c}(\Omega)$ in $H^{1}(\Omega)$. %For these subjects we refer to the book by Brezis, \cite{Brezis}
\par We will work with some other bilinear forms, inner products and norms, let us describe them below and give some references.
%
%\par In $H^{2}(\Omega)\cap H_{0}^{1}(\Omega)$ we have an inner product defined by
%\begin{equation}\label{producto-corchete}
%\{u,v\} =  (\Delta u, \Delta v).
%\end{equation}
%The norm $\{u,u\}^{\frac{1}{2}}$ is equivalent to $||u||_{H^{2}(\Omega)}$ in $H^{2}(\Omega)\cap H_{0}^{1}(\Omega)$ (see for example \cite{Ladyzehnskaya}). This provides us with the useful fact that if $\Delta u \in L^{2}(\Omega)$ then $u\in H^{2}(\Omega)\cap H_{0}^{1}(\Omega)$.

\par In $H^1_0(\Omega)$ we have the bilinear form defined by
\begin{equation}\label{a(u,v)}
\begin{array}{rcl}
a\colon & H^1_0(\Omega)\times H^1_0(\Omega) & \rightarrow \bbR\\
 & (u,v) & \rightarrow a(u,v)= \int\limits_{\Omega} \nabla u\nabla v \dd x
\end{array} 
  \end{equation}
and in $H^1(\Omega)$ we have the bilinear form defined by
\begin{equation}\label{a_beta(u,v)}
\begin{array}{rcl}
a_{\be}\colon & H^1(\Omega)\times H^1(\Omega) & \rightarrow \bbR\\
 & (u,v) & \rightarrow a_\beta(u,v)= \int\limits_{\Omega} \nabla u\nabla v \dd x +  \beta \int\limits_{\partial\Omega} uv \dd\gamma.
\end{array}
\end{equation}

\par Note that in $H^1(\Omega)$ we have that $||v||_{H^{1}(\Omega)} = [a(v,v)+(v,v)]^{1/2}$.

\par The next Lemma was proved in \cite{TaMathNotae:1979,Ta:1979} and it is a very useful tool. 
\begin{lemma}\label{equiv-norm-H1} There exists a constant $\eta_1>0$ such that 
\begin{equation}\label{LemaTop}
a(v,v)+ \int\limits_{\partial\Omega} v^2 \dd\gamma \geq \eta_1 ||v||^2_{H^1(\Omega)} \qquad \forall \, v\, \in H^1(\Omega). 
\end{equation}

Moreover, the induced norm $\sqrt{a_\be(\cdot,\cdot)}$ in $H^1(\Omega)$ is equivalent to the  classical norm in $H^1(\Omega)$ and there exists a constant $\eta_\be= \eta_1\cdot \min\left\{ 1,\be\right\}$ such that 
\begin{equation}\label{lemaTop2}
a_\be(v,v) \geq \eta_\be ||v||^2_{H^{1}(\Omega)} \qquad \forall \, v\, \in H^1(\Omega). 
\end{equation}

\end{lemma}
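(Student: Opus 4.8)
The plan is to establish the fundamental inequality \eqref{LemaTop} first by a compactness-and-contradiction argument, and then to obtain the norm equivalence and the quantitative bound \eqref{lemaTop2} as elementary consequences. The two analytic tools I would rely on are the compact embedding $H^1(\Omega)\hookrightarrow L^2(\Omega)$ (Rellich--Kondrachov, valid since $\p\Omega$ is Lipschitz) and the continuity of the trace operator $H^1(\Omega)\to L^2(\p\Omega)$. Throughout I read $a(v,v)=\int_\Omega|\nabla v|^2\,\dd x$, which is the natural extension of \eqref{a(u,v)} to all of $H^1(\Omega)$.

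For \eqref{LemaTop}, suppose to the contrary that no such $\eta_1>0$ exists. Then there is a sequence $\{v_k\}\subset H^1(\Omega)$ with $||v_k||_{H^1(\Omega)}=1$ and
\[ \int_\Omega |\nabla v_k|^2\,\dd x + \int_{\p\Omega} v_k^2\,\dd\gamma \longrightarrow 0. \]
In particular $\nabla v_k\to 0$ in $L^2(\Omega)$. Since $\{v_k\}$ is bounded in $H^1(\Omega)$, Rellich--Kondrachov yields a subsequence converging strongly in $L^2(\Omega)$ to some $v$. Combined with $\nabla v_k\to 0$, the closedness of the weak gradient gives $v\in H^1(\Omega)$ with $\nabla v=0$ and $v_k\to v$ strongly in $H^1(\Omega)$; hence $v$ is constant on each connected component of $\Omega$. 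From $||\nabla v_k||_{L^2}\to 0$ and $||v_k||_{H^1}=1$ we get $||v||_{L^2}=1$, so $v\not\equiv 0$. Finally, strong $H^1$-convergence and trace continuity give $v_k|_{\p\Omega}\to v|_{\p\Omega}$ in $L^2(\p\Omega)$, whence $\int_{\p\Omega}v^2\,\dd\gamma=0$, i.e. $v|_{\p\Omega}=0$. A nonzero function constant on the (connected) domain $\Omega$ cannot have vanishing trace, which is the desired contradiction.

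With \eqref{LemaTop} in hand, the bound \eqref{lemaTop2} follows by a two-case comparison. If $\be\geq 1$, then $\be\int_{\p\Omega}v^2\geq\int_{\p\Omega}v^2$, so $a_\be(v,v)\geq a(v,v)+\int_{\p\Omega}v^2\geq\eta_1||v||^2_{H^1(\Omega)}$. If $0<\be<1$, then since $(1-\be)\int_\Omega|\nabla v|^2\geq 0$ one has $a_\be(v,v)\geq\be\bigl(a(v,v)+\int_{\p\Omega}v^2\bigr)\geq\be\,\eta_1||v||^2_{H^1(\Omega)}$. The two cases combine into the stated constant $\eta_\be=\eta_1\min\{1,\be\}$. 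The reverse inequality $a_\be(v,v)\leq C_\be||v||^2_{H^1(\Omega)}$ is immediate from the trace estimate $\int_{\p\Omega}v^2\leq C_{tr}||v||^2_{H^1(\Omega)}$, and together with \eqref{lemaTop2} this shows $\sqrt{a_\be(\cdot,\cdot)}$ is equivalent to the standard norm of $H^1(\Omega)$.

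The crux is the compactness step: one must transfer $\nabla v_k\to 0$ into $\nabla v=0$ through closedness of the weak gradient, and use strong (not merely weak) $H^1$-convergence to push the boundary traces to the limit via trace continuity, while retaining unit $L^2$-norm to guarantee $v\not\equiv 0$. The only structural hypotheses used are that $\p\Omega$ is regular enough for Rellich--Kondrachov and the trace theorem (Lipschitz suffices) and that $\Omega$ is connected; on a domain with several components the argument applies componentwise, since each component contributes a portion of $\p\Omega$ on which the corresponding limiting constant must vanish.
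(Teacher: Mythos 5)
Your proof is correct: the contradiction/compactness argument (Rellich--Kondrachov plus trace continuity, then normalization to force a nonzero constant limit with vanishing trace) is a complete and rigorous derivation of \eqref{LemaTop}, and your two-case comparison $\be\geq 1$ versus $0<\be<1$ yields exactly the stated constant $\eta_\be=\eta_1\min\{1,\be\}$, with the trace theorem giving the reverse bound needed for norm equivalence. Note, however, that the paper does not prove this lemma at all: it is quoted from the earlier works of Tarzia cited there, so there is no internal proof to compare against; your argument makes the result self-contained, which is a genuine addition. Two remarks on the fine points. First, you correctly flag that $a(\cdot,\cdot)$ in \eqref{a(u,v)} is defined on $H^1_0(\Omega)\times H^1_0(\Omega)$ and must be read as $\int_\Omega|\nabla v|^2\,{\rm d}x$ on all of $H^1(\Omega)$ for the lemma to make sense; this is the intended reading. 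Second, your handling of connectedness is the right caveat: since the paper takes $\Omega$ to be a bounded \emph{domain} (hence connected) with Lipschitz boundary, the limit function is a single constant and its trace lives on a boundary of positive surface measure, so the contradiction goes through; the componentwise extension you sketch is also sound because a bounded Lipschitz open set has finitely many components, each with boundary of positive measure. The one structural trade-off worth naming is that your argument is non-quantitative --- it produces $\eta_1$ by contradiction rather than explicitly --- whereas direct proofs (e.g.\ splitting $v$ into its mean and oscillation and combining the Poincar\'e--Wirtinger inequality with a trace bound) give a computable constant; nothing in the paper requires an explicit $\eta_1$, so this costs nothing here.
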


The next theorem is well known (see \cite[Theorem 9.31]{Brezis}), and we present it for the benefit of the reader.
\begin{theorem}\label{ExBaseH-PbST-D}
There is a Hilbert basis $\{\varphi_{n}(\cdot)\}_{n\ge 1}$  of $L^{2}(\Omega)$ and a sequence of real positive numbers $\{\lambda^D_{n}\}_{n\ge 1}$, $0<\lambda^D_{1}<\lambda^D_{2}\leq \ldots$ enumerated
according to their multiplicities,  with $\lambda^D_{n}\to \infty$ such that 
\begin{itemize}
\item[(i)]\label{i-D} $\varphi_{n}(\cdot)\in H_0^{1}(\Omega)\cap C^{\infty}(\Omega)$ and 
\item[(ii)]\label{ii-D} $\Delta \varphi_{n}(\bx)+\lambda^D_{n}\varphi_{n}(\bx)=0$, $\bx \in $  $\Omega$.
\end{itemize}
The $\lambda^D_{n}$'s are the eigenvalues of $-\Delta$ with Dirichlet boundary condition, and the $\varphi_n$'s are the associated eigenfunctions.
\end{theorem}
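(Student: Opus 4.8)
The plan is to recover the pair $(\lambda_n^D,\varphi_n)$ as the spectral data of the compact, self-adjoint solution operator of the Dirichlet problem, and then to invoke the classical spectral theorem for such operators on $L^2(\Omega)$.

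First I would fix $f \in L^2(\Omega)$ and consider the weak Dirichlet problem: find $u \in H^1_0(\Omega)$ with $a(u,v)=(f,v)$ for all $v \in H^1_0(\Omega)$, where $a$ is the form \eqref{a(u,v)}. Since $\Omega$ is bounded, the Poincar\'e inequality makes $a$ coercive on $H^1_0(\Omega)$, so Lax--Milgram provides a unique solution and defines a bounded linear operator $T \colon L^2(\Omega) \to H^1_0(\Omega) \hookrightarrow L^2(\Omega)$, $Tf=u$. The symmetry of $a$ together with the defining identity gives $(Tf,g)=(f,Tg)$ for all $f,g$, so $T$ is self-adjoint; choosing $v=Tf$ yields $(Tf,f)=a(Tf,Tf)\ge 0$, so $T$ is positive; and $Tf=0$ forces $(f,v)=a(Tf,v)=0$ for all $v$, hence $f=0$, so $T$ is injective.

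Next I would establish compactness. The coercivity estimate bounds $||Tf||_{H^1(\Omega)}$ by $||f||_{L^2(\Omega)}$, so $T$ sends bounded sets of $L^2(\Omega)$ into bounded sets of $H^1_0(\Omega)$; the Rellich--Kondrachov theorem (valid on the bounded domain $\Omega$) makes the inclusion $H^1_0(\Omega)\hookrightarrow L^2(\Omega)$ compact, whence $T$ is compact. Then the spectral theorem for compact, self-adjoint, positive, injective operators yields a Hilbert basis $\{\varphi_n\}_{n\ge 1}$ of $L^2(\Omega)$ consisting of eigenfunctions of $T$, with eigenvalues $\mu_n>0$ decreasing to $0$. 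Setting $\lambda_n^D := 1/\mu_n$, the relation $T\varphi_n=\mu_n\varphi_n$ rewrites as $a(\varphi_n,v)=\lambda_n^D(\varphi_n,v)$ for every $v \in H^1_0(\Omega)$, which is exactly the weak form of $\Delta\varphi_n+\lambda_n^D\varphi_n=0$ with homogeneous Dirichlet data encoded by $\varphi_n\in H^1_0(\Omega)$. The $\lambda_n^D$ are then positive, ordered $0<\lambda_1^D\le\lambda_2^D\le\cdots$, and $\lambda_n^D\to\infty$ because $\mu_n\to 0$ (the strictness $\lambda_1^D<\lambda_2^D$ following from simplicity of the principal eigenvalue, via positivity of $\varphi_1$).

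Finally I would upgrade regularity. Interior elliptic regularity applied to the weak eigenvalue equation gives $\varphi_n\in H^2_{loc}(\Omega)$, and a bootstrap gaining two derivatives at each step, combined with the smoothness of $\p\Omega$, yields $\varphi_n\in C^\infty(\Omega)$, establishing (i) and (ii). The one genuinely analytic ingredient, and thus the part I would expect to be the main obstacle were it not available off the shelf, is the compactness of $T$, namely the Rellich--Kondrachov embedding; everything else amounts to assembling standard facts. Since the statement is classical, in the paper I would simply cite \cite[Theorem 9.31]{Brezis}.
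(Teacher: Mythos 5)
Your proposal is correct, and it coincides with the paper's treatment: the paper offers no proof of its own but cites \cite[Theorem 9.31]{Brezis}, whose argument is precisely the one you reconstruct (compact, self-adjoint, positive, injective solution operator via Lax--Milgram and Rellich--Kondrachov, then the spectral theorem plus elliptic regularity). Your closing remark that you would simply cite Brezis matches exactly what the authors do.
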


\par Following the same variational techniques and compact operators arguments, it is straightforward to derive the next result which is analogous to Theorem \ref{ExBaseH-PbST-D} for the $\beta-IBVP$:
\begin{theorem}\label{ExBaseH-PbST-beta}
For every $\beta>0$, there is a Hilbert basis $\{\psi_{n}(\be,\cdot)\}_{n\ge 1}$  of $L^{2}(\Omega)$ and a sequence of real positive numbers $\{\lambda_{n}(\be)\}_{n\ge 1}$, $0<\lambda_{1}(\be)<\lambda_{2}(\be)\leq \ldots$ enumerated
according to their multiplicities, with $\lambda_{n}(\be)\to \infty$ such that 
\begin{itemize}
\item[(i)]\label{i-beta} $\psi_{n}(\be,\cdot)\in H^{1}(\Omega)\cap C^{\infty}(\Omega)$,
\item[(ii)]\label{ii-beta} $\Delta \psi_{n}(\be,\bx)+\lambda_{n}(\be)\psi_{n}(\be,\bx)=0$, $\bx \in $  $\Omega$, and
\item[(iii)]\label{iii-beta} $\frac{\partial}{\partial n} \psi_{n}(\be,\bx) + \beta \psi_{n}(\be,\bx)=0$ for every  $ \bx\in \partial\Omega$.
\end{itemize} 
The $\lambda_{n}(\be)$'s are the eigenvalues of $-\Delta$ with Robin boundary condition of parameter $\be$, and the $\psi_n(\be,\cdot)$'s are the associated eigenfunctions.
\end{theorem}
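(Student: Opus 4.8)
The plan is to mirror the variational and compact-operator arguments that yield Theorem \ref{ExBaseH-PbST-D}, replacing the Dirichlet form on $H^1_0(\Omega)$ by the Robin form $a_\be$ on $H^1(\Omega)$ from \eqref{a_beta(u,v)}. The weak eigenvalue problem reads: find $(\la,\psi)\in\bbR\times H^1(\Omega)$, $\psi\neq 0$, with
\begin{equation*}
a_\be(\psi,v)=\la\,(\psi,v)\qquad\forall\, v\in H^1(\Omega).
\end{equation*}
The essential input special to the Robin case is Lemma \ref{equiv-norm-H1}: estimate \eqref{lemaTop2} shows that $a_\be$ is coercive on the whole of $H^1(\Omega)$ (unlike the plain Dirichlet form, which is coercive only on $H^1_0(\Omega)$), while continuity of $a_\be$ is immediate from Cauchy--Schwarz and the trace inequality. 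Hence $a_\be$ is a symmetric, continuous, coercive bilinear form, i.e. an inner product on $H^1(\Omega)$ equivalent to the usual one.

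First I would invoke Lax--Milgram: for each $f\in L^2(\Omega)$ there is a unique $T_\be f\in H^1(\Omega)$ with $a_\be(T_\be f,v)=(f,v)$ for all $v\in H^1(\Omega)$, and \eqref{lemaTop2} gives the bound $\eta_\be\|T_\be f\|^2_{H^1(\Omega)}\le (f,T_\be f)$, so $T_\be\colon L^2(\Omega)\to H^1(\Omega)$ is bounded. Composing with the compact embedding $H^1(\Omega)\hookrightarrow L^2(\Omega)$ (Rellich--Kondrachov, valid since $\Omega$ is bounded) makes $T_\be\colon L^2(\Omega)\to L^2(\Omega)$ compact. Symmetry of $a_\be$ gives, for $f,g\in L^2(\Omega)$, $(T_\be f,g)=a_\be(T_\be g,T_\be f)=a_\be(T_\be f,T_\be g)=(f,T_\be g)$, so $T_\be$ is self-adjoint; taking $g=f$ together with \eqref{lemaTop2} gives $(T_\be f,f)=a_\be(T_\be f,T_\be f)\ge 0$, with equality forcing $T_\be f=0$ and then $f=0$, so $T_\be$ is also positive and injective.

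The spectral theorem for compact self-adjoint operators then provides an orthonormal basis $\{\psi_n(\be,\cdot)\}_{n\ge1}$ of $L^2(\Omega)$ consisting of eigenfunctions of $T_\be$, with eigenvalues $\mu_n>0$ (positivity and injectivity exclude $\mu=0$) satisfying $\mu_n\to 0$. Setting $\la_n(\be)=1/\mu_n$ and unwinding the definition of $T_\be$ turns $T_\be\psi_n=\mu_n\psi_n$ into the weak eigenrelation $a_\be(\psi_n,v)=\la_n(\be)(\psi_n,v)$ for all $v\in H^1(\Omega)$; choosing $v=\psi_n$ and using \eqref{lemaTop2} shows $\la_n(\be)=a_\be(\psi_n,\psi_n)>0$, while $\la_n(\be)=1/\mu_n\to\infty$, so the eigenvalues may be listed increasingly with multiplicity as claimed.

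It remains to recover the classical statements (i)--(iii), which is where the real work lies. Testing the weak eigenrelation against $v\in C^\infty_c(\Omega)$ gives the distributional identity $-\Delta\psi_n=\la_n(\be)\psi_n$ in $\Omega$; interior elliptic regularity, bootstrapped on the now-smooth right-hand side, yields $\psi_n\in C^\infty(\Omega)$ and the pointwise equation (ii). Finally, for arbitrary $v\in H^1(\Omega)$ I would apply Green's formula to $\int_\Omega\nabla\psi_n\nabla v\,\dd x$, substitute $-\Delta\psi_n=\la_n(\be)\psi_n$, and compare with the weak eigenrelation to obtain $\int_{\p\Omega}\bigl(\tfrac{\p}{\p n}\psi_n+\be\psi_n\bigr)v\,\dd\gamma=0$ for all such $v$; density of the $H^1$-traces in $L^2(\p\Omega)$ then yields the Robin condition (iii). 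The main obstacle is precisely this last step: justifying Green's identity and giving meaning to the pointwise normal derivative requires regularity of $\psi_n$ up to $\p\Omega$, hence genuine smoothness of the boundary (beyond Lipschitz) so that boundary elliptic regularity applies — this is the only place where the hypothesis on $\p\Omega$ must be strengthened relative to the purely $L^2$ and interior arguments.
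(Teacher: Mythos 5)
Your proposal is correct and follows exactly the route the paper intends: the paper gives no detailed proof of this theorem, stating only that it is ``straightforward to derive'' by the same variational and compact-operator arguments behind Theorem \ref{ExBaseH-PbST-D}, and your Lax--Milgram/Rellich--Kondrachov/spectral-theorem argument with the coercive form $a_\be$ from Lemma \ref{equiv-norm-H1} is precisely that argument, carried out in full. Your closing remark about needing boundary regularity beyond Lipschitz to make sense of (iii) pointwise is also apt, since the paper's standing assumption of a sufficiently smooth $\p\Omega$ is what licenses that step.
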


\subsection{Weak solutions.}
Following the work of \cite{SaYa:2011}, we define now the idea of weak solution related to problems \eqref{D-IBVP}, \eqref{beta-IBVP} and the Fourier approach proposed. These are somewhat technical definitions but we need them for the sake of rigurosity. Let us consider the space
\begin{equation}\label{W1t}
W^1_t(\Omega\times [0,T])=\left\{w\,\colon\, w(x,\cdot) \in AC[0,T]\right\}.
\end{equation}
For the Fractional Diffusion Equation we have:
\begin{definition}\label{weakSolFDE}  We say that a function $w:[0,T]\rightarrow L^2(\Omega)  $ is a \textit{weak solution} to the FDE \eqref{FDE} if there exists a sequence $(w_n)_{n\in \bbN}$ of smooth functions in $C(\bar{\Omega}\times [0,T])\cap C^{2}_x(\Omega\times (0,T))\cap W^1_t(\Omega\times [0,T])$ such that $w_n$ verifies the FDE \eqref{FDE} in the classical sense for every $n\in \bbN$, and: 
\begin{itemize}
\item[(i)] $w_n \to w$ in $L^2(\Omega)$, a.e. in (0,T).
\item[(ii)] There exists $v \in C\left( (0,T); L^2(\Omega)\right)$ such that  $\lim\limits_{n\to \infty} \left\|\DC w_n (\cdot,t)-v(\cdot.t) \right\|_{L^2(\Omega)}=0 $ a.e. in $ (0,T)$.  In such a case we will write $v=\, \DC w$. 
 \item[(iii)] $\Delta w \in C((0,T);L^2(\Omega))$ and $ \DC w=\Delta w \quad \text{in }\, L^2(\Omega), \, a.e. \, (0,T).$ 
\end{itemize}
\end{definition}

Next, for the Dirichlet Initial Boundary Value Problem we have:

\begin{definition}\label{weakSolD-IBVP} We say that $u$ is a \textit{weak solution} to problem \eqref{D-IBVP} if there exists a sequence $(u_n)_{n\in \bbN}$ of functions in $C(\bar{\Omega}\times [0,T])\cap C^{2}_x(\Omega\times (0,T))\cap W^1_t(\Omega\times [0,T])$ such that 
\begin{itemize}
\item[(i)] There exist a sequence $(u_{n0})_n$ in $C(\bar{\Omega})$ such that $ u_{n0}\to u_0 \, \in \, L^2(\Omega)$ and for every $n\in \bbN$, $u_n$ is a classical solution to the  approximate problem
\begin{equation}\label{D-IBVP-aprox}
\begin{array}{clll}
     (i)  &  \DC u_n(\bx,t)= \Delta u_n(\bx,t), &   \bx \in \Omega, \,  0<t<T,   \,\\
     (ii)  &  u_n(\bx,0)=u_{n0}(\bx), &   \bx \in \Omega \, \,\\
     (iii) &   u_n(\bx,t)=0 &   \bx \in \p \Omega, \,  0<t<T.                      \end{array}
                                             \end{equation}
\item[(ii)] $\DC u$ exists in the sense defined in Definition \ref{weakSolFDE} and $u$ is a weak solution to \eqref{FDE}.
\item[(iii)] $u\in C([0,T];L^2(\Omega))$ and  
$ \lim\limits_{t\rightarrow 0}\left\| u(\cdot,t)-u_0 \right\|_{L^2(\Omega)}=0$.
\end{itemize}
\end{definition}     

And finally, for the Robin Initial Boundary Value Problem we have:

\begin{definition}\label{weakSol-beta-IBVP} We say that $u$ is a \textit{weak solution} to problem \eqref{beta-IBVP} if there exists a sequence $(u_n)_{n\in \bbN}$ of functions in $C(\bar{\Omega}\times [0,T])\cap C^{2}_x(\Omega\times (0,T))\cap W^1_t(\Omega\times [0,T])$ such that 
\begin{itemize}
\item[(i)] There exist a sequence $(u_{n0})_n$ in $C(\bar{\Omega})$  such that $ u_{n0}\to u_0 \, \in \, L^2(\Omega)$ and for every $n\in \bbN$, $u_n$ is a classical solution to the approximate problem
\begin{equation}\label{beta-IBVP-aprox}
\begin{array}{clll}
     (i)  &  \DC u_n(\bx,t)= \Delta u_n(\bx,t), &   \bx \in \Omega, \,  0<t<T,   \,\\
     (ii)  &  u_n(\bx,0)=u_{n0}(\bx), &   \bx \in \Omega \, \,\\
     (iii) &  \frac{\p}{\p \nu} u_n(\bx,t)+\beta u_n(\bx,t)=0 &   \bx \in \p \Omega, \,  0<t<T.   \end{array}
                                             \end{equation}

\item[(ii)] $\DC u$ exists in the sense defined in Definition \ref{weakSolFDE} and $u$ is a weak solution to  \eqref{FDE}.
\item[(iii)] $u\in C([0,T];L^2(\Omega)) $ and  
$ \lim\limits_{t\rightarrow 0}\left\| u(\cdot,t)-u_0 \right\|_{L^2(\Omega)}=0.$
\end{itemize}
\end{definition}     

\section{The Fourier Approach}

\par Following the lines of the works  \cite{Lu:2010} and \cite{SaYa:2011}, we look for a  solution to problem \eqref{beta-IBVP} constructed analytically by using the Fourier method of variable separation.  Thus, let us recall first Theorem 2.1 of \cite{SaYa:2011}.

\begin{theorem}\label{sol-D-IBVP} If $u_0 \in  L^2(\Omega)$ then there exists a unique weak solution $u_{D}\in C([0,T];L^2(\Omega)) \cap C((0,T];H^2(\Omega)\cap H^1_0(\Omega))$ to the D-IBVP \eqref{D-IBVP} such that $\DC u_D \in C((0,T];L^2(\Omega))$. Moreover, there exists a constant $C>0$ such that 
\begin{equation}\label{cota-uD-L2} ||u_D||_{C([0,T];L^2(\Omega))} \le  C ||u_{0}||_{L^2(\Omega)}, \end{equation} and we have that the solution is given by the series
\begin{equation}\label{series-u-D} u_D(\bx,t) =  \sum\limits_{n=1}^{\infty} (u_0,\varphi_{n})E_{\al}(-\lambda^D_{n}t^{\al})\varphi_{n}(\bx) \end{equation} 

\noindent where $(\la_n^D,\varphi_n)$ are the eigenvalues and eigenfunctions given in Theorem \ref{ExBaseH-PbST-D}.

\end{theorem}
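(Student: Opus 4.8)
The plan is to build $u_D$ by separation of variables against the Dirichlet eigenbasis $\{\varphi_n\}$ of Theorem \ref{ExBaseH-PbST-D} and then verify, in turn, convergence of the resulting series, the a priori bounds, membership in the stated spaces, the weak-solution property of Definition \ref{weakSolD-IBVP}, and uniqueness. First I would expand $u_0=\sum_{n\ge 1}(u_0,\varphi_n)\varphi_n$ in $L^2(\Omega)$ and seek $u_D(\bx,t)=\sum_{n\ge 1}T_n(t)\varphi_n(\bx)$. Substituting this ansatz into \eqref{FDE} and using $\Delta\varphi_n=-\lambda_n^D\varphi_n$ formally decouples the equation into the scalar fractional initial value problems $\DC T_n(t)=-\lambda_n^D T_n(t)$ with $T_n(0)=(u_0,\varphi_n)$; by the eigenrelation \eqref{ML-prop-DC} of Proposition \ref{PropsE}, the function $T_n(t)=(u_0,\varphi_n)E_\al(-\lambda_n^D t^\al)$ solves it, which is precisely the series \eqref{series-u-D}.

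To make this rigorous I would use Parseval's identity repeatedly. Since $0<E_\al(-\lambda_n^D t^\al)\le 1$ by \eqref{ML<1}, one gets $\|u_D(\cdot,t)\|_{L^2(\Omega)}^2=\sum_{n\ge1}(u_0,\varphi_n)^2E_\al(-\lambda_n^D t^\al)^2\le\|u_0\|_{L^2(\Omega)}^2$, which yields \eqref{cota-uD-L2} with $C=1$ and also shows convergence of the series in $L^2(\Omega)$ for every $t$. For the interior regularity, applying $\Delta$ termwise gives $\|\Delta u_D(\cdot,t)\|_{L^2(\Omega)}^2=\sum_{n\ge1}(\lambda_n^D)^2(u_0,\varphi_n)^2E_\al(-\lambda_n^D t^\al)^2$, and here the sharper decay \eqref{acotaML} is essential: since $(\lambda_n^D)^2E_\al(-\lambda_n^D t^\al)^2\le C^2(\lambda_n^D)^2/(1+\lambda_n^D t^\al)^2\le C^2 t^{-2\al}$, the factor $\lambda_n^D$ is absorbed and $\|\Delta u_D(\cdot,t)\|_{L^2(\Omega)}\le C t^{-\al}\|u_0\|_{L^2(\Omega)}$ for $t>0$. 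Elliptic regularity for $-\Delta$ with Dirichlet data then bounds $\|u_D(\cdot,t)\|_{H^2(\Omega)}$, and because $\DC u_D=\Delta u_D$ by construction, the same estimate controls $\|\DC u_D(\cdot,t)\|_{L^2(\Omega)}$. The $t^{-\al}$ blow-up is harmless away from the origin: bounding the tails uniformly on each interval $[\delta,T]$ gives uniform convergence in $C([\delta,T];H^2(\Omega)\cap H^1_0(\Omega))$, hence $u_D\in C((0,T];H^2(\Omega)\cap H^1_0(\Omega))$ and $\DC u_D\in C((0,T];L^2(\Omega))$. Continuity up to $t=0$ in $L^2(\Omega)$, with $u_D(\cdot,t)\to u_0$, follows from $\|u_D(\cdot,t)-u_0\|_{L^2(\Omega)}^2=\sum_{n\ge1}(u_0,\varphi_n)^2\,(E_\al(-\lambda_n^D t^\al)-1)^2\to 0$ by dominated convergence for series, each summand tending to $0$ and dominated by $4(u_0,\varphi_n)^2$.

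To identify $u_D$ as a weak solution in the sense of Definition \ref{weakSolD-IBVP}, I would take the partial sums $u_N(\bx,t)=\sum_{n=1}^N(u_0,\varphi_n)E_\al(-\lambda_n^D t^\al)\varphi_n(\bx)$ with initial data $u_{N0}=\sum_{n=1}^N(u_0,\varphi_n)\varphi_n$: each $u_N$ is a finite sum of smooth functions, solves \eqref{D-IBVP-aprox} classically (using \eqref{ML-prop-DC} and $\varphi_n\in H^1_0(\Omega)$ for the boundary condition), and $u_{N0}\to u_0$ in $L^2(\Omega)$; the convergences already established verify items (i)--(iii) of Definitions \ref{weakSolFDE} and \ref{weakSolD-IBVP}. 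For uniqueness, since energy methods are obstructed by the failure of the chain rule \eqref{regla-cadena} in the fractional setting, I would instead project onto modes: if $u$ is a weak solution with $u_0=0$, set $y_n(t)=(u(\cdot,t),\varphi_n)$. Self-adjointness of the Dirichlet Laplacian gives $(\Delta u,\varphi_n)=-\lambda_n^D(u,\varphi_n)$ with no boundary contribution, and the $L^2$-continuity of $\DC u$ lets $\DC$ commute with the pairing against $\varphi_n$; hence $\DC y_n=-\lambda_n^D y_n$ with $y_n(0)=0$. Uniqueness for this scalar fractional initial value problem, whose solution space is spanned by $E_\al(-\lambda_n^D t^\al)$, forces $y_n\equiv 0$ for all $n$, so $u\equiv 0$.

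The main obstacle is the interior estimate of the second paragraph: a naive termwise bound on $\|\Delta u_D\|_{L^2(\Omega)}$ diverges, and it is only the quantitative decay \eqref{acotaML} of the Mittag-Leffler function that allows the eigenvalue factor $\lambda_n^D$ to be absorbed, at the cost of an integrable $t^{-\al}$ singularity at the origin; correctly localizing this singularity to obtain continuity on $(0,T]$ while separately securing $L^2$-continuity at $t=0$ is the crux of the argument. A secondary technical point is justifying, in the uniqueness step, that $\DC$ commutes with the $L^2$ pairing against $\varphi_n$, which rests precisely on the continuity in $L^2(\Omega)$ built into the weak-solution framework.
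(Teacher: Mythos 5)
Your proposal is correct and takes essentially the same route as the paper: the paper itself simply quotes this theorem from Sakamoto--Yamamoto \cite{SaYa:2011}, and the proof it gives for the Robin analogue (Theorem \ref{sol-beta-IBVP}), which it describes as mimicking the proof of this very statement, is precisely your argument --- projection onto the eigenbasis, reduction to scalar fractional initial value problems solved by $E_\al(-\lambda_n t^\al)$, Parseval bounds with \eqref{ML<1} for the estimate \eqref{cota-uD-L2}, the decay \eqref{acotaML} to absorb the eigenvalue factor at the cost of a $t^{-\al}$ singularity, and uniqueness by showing each Fourier mode solves the homogeneous scalar problem. The only cosmetic difference is that you invoke elliptic regularity explicitly to pass from the $\Delta u_D$ bound to the $H^2(\Omega)$ statement, a step the paper leaves implicit in its citation.
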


\par As we said above,  we look for a particular solution $u$ of the equation $\eqref{beta-IBVP}-i$ of the form
\begin{equation}\label{sol-part-var-sep}
 u(\bx,t)=\psi (\bx) \eta (t),
\end{equation}
then, by replacing \eqref{sol-part-var-sep} in \eqref{beta-IBVP}$-i$, we are lead to the following equations
\begin{equation}\label{ecs-var-sep}
\frac{_0^C D^\al_t \eta (t)}{\eta(t)}=\frac{\Delta \psi (\bx)}{\psi(\bx)}=-\lambda,
\end{equation}
where $\lambda$ is a constant which does not depend on $\bx$ nor $t$. Thus we are left with two different problems. 

\par The first one is regarding the spatial variable by considering the spatial  equation in  \eqref{ecs-var-sep} together  with the Robin boundary condition, which derives in a classical Sturm-Liouville problem.  And the second one is for the temporal variable which is an ordinary fractional differential equation. Both problems will be acoppled later by using the initial condition \eqref{beta-IBVP} (ii). More precisely, we have problems:
\begin{equation}\label{PD-beta}
\begin{array}{rll}
 (i) & \Delta \psi(\bx)+\lambda \psi(\bx)=0, & \bx \in \Omega, \\
  (ii) & \frac{\p \psi}{\p n}(\bx)+\beta \psi(\bx)=0 &   \bx\in \p \Omega;  
\end{array}
\end{equation}
and 
\begin{equation}\label{ODE-Caputo}
\begin{array}{rll}
 (i) & \DC \eta(t)=-\lambda \eta(t), & t \in (0,T), \\
  (ii) & \eta(0)= 1. &    
\end{array}
\end{equation}

According to Theorem \ref{ExBaseH-PbST-beta} and  Proposition 
\ref{PropsE} item $3$,  it is natural to consider function $u_\be: [0,T]\rightarrow L^2(\Omega)$ defined as a series
%\begin{equation}\label{series-u-D} u_D(\bx,t) =  \sum\limits_{n=1}^{\infty} (u_0,\varphi_{n})E_{\al,1}(-\lambda_{n}t^{\al})\varphi_{n}(\bx) \end{equation} 
%as a desired solution to problem \eqref{D-IBVP}

\begin{equation}\label{series-u-beta} u_\be(\bx,t) =  \sum\limits_{n=1}^{\infty} (u_0,\psi_{n}(\be;\cdot))E_\al(-\lambda_{n}(\be)t^{\al})\psi_{n}(\be;\bx) \end{equation} 
as a desired solution to problem \eqref{beta-IBVP}, where $\left(\psi_{n}(\be),\lambda_n(\be)\right)$ are the eigenfunctions and eigenvalues given in Theorem \ref{ExBaseH-PbST-beta}.

It is easy to prove that $u_\be$ is well defined. Let us see that $u_\be$ is  a weak solution to equation \eqref{FDE} in the sense of  Definition \ref{weakSolFDE}. For that purpose let 
$w:(0,T]\rightarrow L^2(\Omega)$ be the function defined by
\begin{equation}\label{v} w(\bx,t) =  \sum\limits_{n=1}^{\infty} (u_0,\psi_{n}(\be;\cdot))E_\al(-\lambda_{n}(\be)t^{\al})(-\lambda_n(\be))\psi_{n}(\be;\bx) \end{equation} 
which is well defined. Indeed,  by applying inequality \eqref{acotaML} we get  
\begin{align} \nonumber
||w(\cdot,t)||^2_{L^2(\Omega)} = & \sum\limits_{n=1}^{\infty} |(u_{0}, \psi_{n}(\be;\cdot))|^2 |E_{\alpha}(-\lambda_{n}(\be)t^{\al})|^2 \lambda^2_{n}(\be) \\ \label{cotaDeltau}\le  & \, C\sum\limits_{n=1}^{\infty} |(u_{0}, \psi_{n}(\beta;\cdot))|^2 \left(\frac{\lambda_{n}(\be)}{1+\lambda_{n}(\be)t^{\al}}\right)^2  \leq C ||u_{0}||^2_{L^2(\Omega)} t^{-2\al}.
 \end{align}  
 By the other side, let the sequence $(w_n)_{n\in\bbN}$ be, where for every $n\in \bbN$ the function $w_n:(\Omega \times [0,T])\rightarrow \bbR$ is given by the  finite sum
$$  w_n (x,t)=  \sum\limits_{k=1}^{n} (u_0,\psi_{k}(\be))E_\al(-\lambda_{k}(\be)t^{\al})(-\lambda_k(\be))\psi_{k}(\be; \bx).$$

Clearly, if we define the functions $u_{\be n}=\sum\limits_{k=1}^{n} (u_0,\psi_{k}(\be))E_\al(-\lambda_{k}(\be)t^{\al})\psi_{k}(\be; \bx)$ for every natural $n$,  we have that $\DC u_{\be n}(x,t)= w_n.$ \\
Now, for every $t>0$ we have
\begin{align*}
||w(\cdot,t)- \DC u_{\be n}(\cdot,t)||^2_{L^2(\Omega)} =&  \sum\limits_{k=n+1}^{\infty} |(u_{0}, \psi_{k}(\be; \cdot))|^2 |E_\al(-\lambda_{k}(\be)t^{\al})|^2 \lambda_{k}^2(\be) \\
\leq &   C  t^{-2\al} \sum\limits_{k=n+1}^{\infty} |(u_{0}, \varphi_{k}(\cdot))|^2\rightarrow 0, \text{ if }\, n\to\infty
 \end{align*}
from where we conclude that $w=\,\DC u_\be$ in the sense of Definition \ref{weakSolFDE}.\\
Analogously, we deduce that $\Delta u= w$ and we conclude that 
$$ \DC u_\be =\Delta u_\be \quad \text{in }\, L^2(\Omega), \, a.e. \, (0,T), \quad \text{for every } \be>0,$$ 
that is, $u_\be$ is a weak solution to the FDE \eqref{FDE}.  
\begin{remark} It is straightforward that \eqref{series-u-beta} is a weak solution to problem \eqref{beta-IBVP} according to Definition \ref{weakSol-beta-IBVP}. Note that the boundary conditions are verified from Theorem \ref{ExBaseH-PbST-beta}. \end{remark}

From the previous reasoning we state the following Lemma. 
\begin{lemma}\label{beta-weak-sol} The function \eqref{series-u-beta} belongs to $C([0,T];L^2(\Omega))$ and it is a $weak$ solution to the FDE $\eqref{FDE}$ in the sense of Definition \ref{weakSolFDE}. 
\end{lemma}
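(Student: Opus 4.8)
The plan is to establish the two assertions of Lemma \ref{beta-weak-sol} separately: first the continuity $u_\be\in C([0,T];L^2(\Omega))$, and then the fact that $u_\be$ is a weak solution to the FDE. The second claim, however, has essentially already been verified in the discussion preceding the Lemma: the functions $u_{\be n}$ are finite sums of products $E_\al(-\lambda_k(\be)t^\al)\psi_k(\be;\bx)$, each of which is smooth in $\bx$ and lies in $W^1_t$, and each solves the FDE classically by Proposition \ref{PropsE} item $3$ together with the eigenvalue equation \eqref{PD-beta}$(i)$. The convergences $u_{\be n}\to u_\be$ and $\DC u_{\be n}=w_n\to w$ in $L^2(\Omega)$ for $t>0$ were shown via the tail estimate using \eqref{acotaML}, and $\Delta u_{\be n}=-\sum\lambda_k(\cdots)=w_n$ as well, so all three conditions of Definition \ref{weakSolFDE} hold with $v=\Delta u_\be=w$. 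Thus the weak-solution part of the statement is a direct consequence of the estimates already displayed, and I would simply collect them.

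The remaining and main point is the continuity $u_\be\in C([0,T];L^2(\Omega))$. First I would show $u_\be(\cdot,t)\in L^2(\Omega)$ for each $t$: using $|E_\al(-\lambda_n(\be)t^\al)|\le 1$ from \eqref{ML<1} and Parseval's identity in the Hilbert basis $\{\psi_n(\be,\cdot)\}$, one gets $\|u_\be(\cdot,t)\|_{L^2(\Omega)}^2=\sum_n|(u_0,\psi_n(\be;\cdot))|^2|E_\al(-\lambda_n(\be)t^\al)|^2\le\sum_n|(u_0,\psi_n(\be;\cdot))|^2=\|u_0\|_{L^2(\Omega)}^2$, which is finite and moreover gives the uniform bound \eqref{cota-u-beta-L2} with $C=1$. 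For continuity at a fixed $t_0\in[0,T]$, I would use the standard Weierstrass-type argument: each partial sum $u_{\be n}(\cdot,t)$ is continuous in $t$ as an $L^2(\Omega)$-valued map (a finite sum of scalar functions $t\mapsto E_\al(-\lambda_k(\be)t^\al)$, which are continuous by Proposition \ref{PropsE}, times fixed eigenfunctions), and the tail $\|u_\be(\cdot,t)-u_{\be n}(\cdot,t)\|_{L^2(\Omega)}^2=\sum_{k>n}|(u_0,\psi_k(\be;\cdot))|^2|E_\al(-\lambda_k(\be)t^\al)|^2\le\sum_{k>n}|(u_0,\psi_k(\be;\cdot))|^2$ is bounded uniformly in $t\in[0,T]$ by the tail of the convergent series $\sum_k|(u_0,\psi_k(\be;\cdot))|^2=\|u_0\|_{L^2(\Omega)}^2$. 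Hence $u_{\be n}\to u_\be$ uniformly on $[0,T]$ in $L^2(\Omega)$, and a uniform limit of continuous $L^2(\Omega)$-valued functions is continuous, giving $u_\be\in C([0,T];L^2(\Omega))$.

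The one step requiring slight care, which I expect to be the main (mild) obstacle, is continuity up to $t=0$ and the scalar continuity $t\mapsto E_\al(-\lambda t^\al)$ at $t=0$: the bound \eqref{ML<1} holds on all of $\bbR_0^+$ and $E_\al(0)=1$, so the partial sums are continuous including at the endpoint, and the uniform-in-$t$ tail estimate above covers $t=0$ as well. Note that the uniform majorant $\sum_{k>n}|(u_0,\psi_k(\be;\cdot))|^2$ is the clean feature of this problem: because $|E_\al(-\lambda_k(\be)t^\al)|\le 1$ uniformly, I avoid the $t^{-2\al}$ blow-up that appeared in \eqref{cotaDeltau} for $w=\DC u_\be$, and no restriction to $t$ bounded away from $0$ is needed for the $L^2$-continuity. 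I would therefore present the argument compactly: state Parseval, invoke \eqref{ML<1} for the uniform tail bound, and conclude by the uniform-convergence criterion, remarking that the weak-solution assertion was established in the computations preceding the Lemma.
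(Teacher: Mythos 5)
Your proposal is correct and follows essentially the same route as the paper: the weak-solution part is exactly the computation with the partial sums $u_{\be n}$, $w_n$ and the tail estimates via \eqref{acotaML} that the paper presents before the Lemma, and the $L^2$-bound via Parseval and \eqref{ML<1} is the same one the paper uses. The only difference is that you spell out the uniform-convergence argument (partial sums continuous as $L^2(\Omega)$-valued maps, tail bounded uniformly in $t\in[0,T]$ by $\sum_{k>n}|(u_0,\psi_k(\be;\cdot))|^2$, hence continuity up to $t=0$), a detail the paper asserts but does not write out; this is a welcome completion, not a deviation.
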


\par The proof of the next theorem is obtained by mimicking the steps of the proof of Theorem \ref{sol-D-IBVP} given in \cite{SaYa:2011} and we are lead to a similar result for the $\be-$IBVP. The difference is on the Robin conditions and the spaces of functions involved. However, we include it for the sake of completeness. Thus, with the notation from Theorem \ref{ExBaseH-PbST-beta} we have the following.

 \begin{theorem}\label{sol-beta-IBVP} Let $\be>0$. If $u_0 \in  L^2(\Omega)$ then there exists a unique weak solution $u_{\be}\in C([0,T];L^2(\Omega)) \cap C((0,T]; H^1(\Omega))$ to the $\be$-IBVP \eqref{beta-IBVP} such that $\DC u_{\be} \in C((0,T];L^2(\Omega))$. Moreover, $u_\be$ is given by the series \eqref{series-u-beta} and  
\begin{align}\label{cota-u-beta-L2} ||u_{\be}||_{C([0,T];L^2(\Omega))} \le &  ||u_{0}||_{L^2(\Omega)}, \end{align} 
\end{theorem}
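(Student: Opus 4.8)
The plan is to establish the four claimed properties of the series \eqref{series-u-beta}, namely membership in the stated function spaces, the representation as a weak solution, the a priori estimate \eqref{cota-u-beta-L2}, and uniqueness, by adapting the argument of Theorem \ref{sol-D-IBVP} from \cite{SaYa:2011} with the Dirichlet data $(\lambda_n^D, \varphi_n)$ replaced by the Robin data $(\lambda_n(\be), \psi_n(\be;\cdot))$. Lemma \ref{beta-weak-sol} together with the reasoning immediately preceding it already delivers that $u_\be \in C([0,T];L^2(\Omega))$ and that $u_\be$ is a weak solution to the FDE in the sense of Definition \ref{weakSolFDE}; moreover the Robin boundary condition $\eqref{beta-IBVP}$-(iii) holds term by term because each $\psi_n(\be;\cdot)$ satisfies it by Theorem \ref{ExBaseH-PbST-beta}(iii). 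So the bulk of the remaining work is regularity, the norm bound, the initial condition, and uniqueness.

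First I would prove the estimate \eqref{cota-u-beta-L2}. Since $\{\psi_n(\be;\cdot)\}$ is a Hilbert basis of $L^2(\Omega)$, Parseval gives
\begin{equation*}
\|u_\be(\cdot,t)\|_{L^2(\Omega)}^2 = \sum_{n=1}^\infty |(u_0,\psi_n(\be;\cdot))|^2 \, |E_\al(-\lambda_n(\be)t^\al)|^2.
\end{equation*}
By \eqref{ML<1} each factor $|E_\al(-\lambda_n(\be)t^\al)|^2 \le 1$ for $t \ge 0$, so the right side is bounded by $\sum_n |(u_0,\psi_n(\be;\cdot))|^2 = \|u_0\|_{L^2(\Omega)}^2$, uniformly in $t \in [0,T]$; taking the supremum over $t$ yields \eqref{cota-u-beta-L2} with constant one. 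Continuity in $t$, i.e. $u_\be \in C([0,T];L^2(\Omega))$, follows from the continuity of each $E_\al(-\lambda_n(\be)t^\al)$ in $t$ together with the uniform bound, which legitimates the dominated-convergence argument for the tail; this is exactly the content already quoted in Lemma \ref{beta-weak-sol}.

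Next I would treat the spatial regularity $u_\be \in C((0,T];H^1(\Omega))$. Using $\Delta \psi_n(\be;\cdot) = -\lambda_n(\be)\psi_n(\be;\cdot)$ and integration by parts with the Robin condition, the natural quantity to control is $a_\be(u_\be,u_\be)$, which by the eigenfunction relation equals $\sum_n \lambda_n(\be)|(u_0,\psi_n(\be;\cdot))|^2 |E_\al(-\lambda_n(\be)t^\al)|^2$. Applying \eqref{acotaML} to bound $\lambda_n(\be)|E_\al(-\lambda_n(\be)t^\al)|^2 \le C\,\lambda_n(\be)/(1+\lambda_n(\be)t^\al)^2 \le C\,t^{-\al}$, this sum is dominated by $C\,t^{-\al}\|u_0\|_{L^2(\Omega)}^2$, finite for each $t>0$; Lemma \ref{equiv-norm-H1} then converts the bound on $a_\be(u_\be,u_\be)$ into a bound on $\|u_\be(\cdot,t)\|_{H^1(\Omega)}^2$, and the same tail estimate gives continuity on $(0,T]$. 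The bound \eqref{cotaDeltau} already established shows $\Delta u_\be \in C((0,T];L^2(\Omega))$, which upgrades the conclusion to $H^2(\Omega)\cap H^1(\Omega)$ as in \eqref{series-u-beta}. The initial condition is immediate from \eqref{cota-u-beta-L2} applied to $u_\be(\cdot,t)-u_0$ and the fact that $E_\al(-\lambda_n(\be)t^\al)\to 1$ as $t\to 0$.

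The step I expect to be the main obstacle is uniqueness, since everything before it is a term-by-term verification on the constructed series. For uniqueness I would follow the Fourier route of \cite{SaYa:2011}: given a weak solution $u$, pair the equation $\DC u = \Delta u$ with each eigenfunction $\psi_n(\be;\cdot)$ to obtain, for the coefficients $u_n(t)=(u(\cdot,t),\psi_n(\be;\cdot))$, the scalar fractional ODE $\DC u_n(t) = -\lambda_n(\be)u_n(t)$ with $u_n(0)=(u_0,\psi_n(\be;\cdot))$; here the self-adjointness encoded in $a_\be$ and the Robin condition (iii) are what let the Laplacian move onto $\psi_n$ without boundary leftovers. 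The unique solution of this scalar problem is $u_n(t)=(u_0,\psi_n(\be;\cdot))E_\al(-\lambda_n(\be)t^\al)$ by Proposition \ref{PropsE} item 2, which forces $u=u_\be$. The delicate points, which I would handle exactly as in \cite{SaYa:2011}, are justifying the interchange of $\DC$ with the inner product (using the Definition \ref{weakSolFDE} convergence of $\DC u_n$ in $C((0,T];L^2(\Omega))$) and the uniqueness of the scalar fractional ODE; given the regularity already built into Definition \ref{weakSol-beta-IBVP}, these are routine but must be stated, after which the theorem follows.
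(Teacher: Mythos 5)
Your proposal is correct and follows essentially the same route as the paper's own proof: Parseval together with $E_\al(-\la t^\al)\le 1$ for the bound \eqref{cota-u-beta-L2}, the $a_\be$-orthogonality of the eigenfunctions plus Lemma \ref{equiv-norm-H1} for the $C((0,T];H^1(\Omega))$ regularity, the estimate on $\Delta u_\be$ for $\DC u_\be \in C((0,T];L^2(\Omega))$, dominated convergence for the initial condition, and pairing with the $\psi_n(\be;\cdot)$ to reduce uniqueness to the scalar fractional initial value problem \eqref{FODEIVP}. The differences are only organizational (the paper derives the series form from the pairing argument first and deduces uniqueness at the end by taking $u_0\equiv 0$, while you verify the constructed series first and prove uniqueness last; also your bound $a_\be(u_\be,u_\be)\le C t^{-\al}\|u_0\|^2_{L^2(\Omega)}$ is slightly sharper than the paper's $C t^{-2\al}\|u_0\|^2_{L^2(\Omega)}/\la_1(\be)$).
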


\proof We first prove that, if $u$ is a formal solution to  \eqref{beta-IBVP}(i), (ii) and (iii), then $u$ is the weak solution given by (27).

\par Let $t\in[0,T]$. If $u(\cdot,t)\in L^2(\Omega)$, then from Theorem \eqref{ExBaseH-PbST-beta} we have the following expansion. For $\bx \in \Omega$,
\begin{align}\label{u-formal-en-serie} u(\bx,t) = & \sum\limits_{n=1}^{\infty} (u(\cdot,t),\psi_{n}(\beta;\cdot))\psi_{n}(\beta;\bx) = \sum\limits_{n=1}^{\infty} u_{n}(\be;t)\psi_{n}(\beta;\bx), \end{align}
where we defined 
\begin{align*}
u_{n}(\be;t) = & (u(\cdot,t),\psi_{n}(\beta;\cdot)).
\end{align*}

\par Since $\DC u(x,t) = \Delta u(x,t)$ for $\bx\in\Omega$, it follows that also we have for each $n\in\mathbb{N}$,
\begin{equation*} (\DC u(\cdot,t),\psi_{n}(\beta;\cdot)) = (\Delta u(\cdot,t),\psi_{n}(\beta;\cdot)). \end{equation*}
To extract some information on $u$ from the above equation let us compute both sides and then compare them.
\par For the left side we recall the definition for the Caputo derivative, namely formula \eqref{Def-Der-Cap}. By applying Fubini's Theorem and the derivation under the integral sign Theorem we end up with
\begin{align*}
(\DC u(\cdot,t),\psi_{n}(\beta;\cdot)) = & \DC u_{n}(\be;t).
\end{align*}
For the right side we apply Green's formula and recall the boundary conditions (\ref{beta-IBVP}-iii) for $u$ and conditions (ii) and (iii) in Theorem \ref{ExBaseH-PbST-beta} for $\psi_{n}$ (eigenvalue and boundary condition, respectively), to obtain
\begin{align*}
(\Delta u(\cdot,t),\psi_{n}(\beta;\cdot)) = & - \lambda_{n}(\beta) u_{n}(\be;t).
\end{align*} 
 
\par Thus for each $n\in\mathbb{N}$ we are left with an ordinary fractional differential equation together with the initial condition (\ref{beta-IBVP}-ii), that is to say, the following initial value problem.
\begin{align}\label{FODEIVP}
\begin{array}{rcl}
\DC u_{n}(\be;t) &=& - \lambda_{n}(\beta) u_{n}(\be;t), \qquad \mbox{ for } t>0 \\
u_{n}(\be;0) &=& (u_{0}, \psi_{n}(\beta;\cdot)).
\end{array}
\end{align}
The well known theory (see for example \cite{Podlubny}, or \cite{Kilbas}, among others) provides us with a unique solution for this problem by means of the Mittag-Leffler functions that we defined in Section \ref{Preliminaries}. More precisely,
\begin{align*}
u_{n}(\be;t) = & (u_{0}, \psi_{n}(\beta;\cdot)) E_\al(-\lambda_{n}(\beta)t^{\al}).
\end{align*}
Replacing the above solution in \eqref{u-formal-en-serie} gives us the desired expression for $u$ which coincide with the function given in \eqref{series-u-beta} and then  the formal solution constructed is a weak solution to problem \eqref{beta-IBVP} according to Lemma \ref{beta-weak-sol}.\\

\par Now, from 
\begin{align*}
||u_\be(\cdot,t)||^{2}_{L^2(\Omega)} \le &  \sum\limits_{n=1}^{\infty} |(u_{0}, \psi_{n}(\beta;\cdot))|^2 |E_\al(-\lambda_{n}(\beta)t^{\al})|^2 \le   ||u_{0}||^2_{L^2(\Omega)}.
\end{align*}
we deduce that $u_\be \in L^2(\Omega) $ for every $t \in [0,T]$, moreover we have that $ u_\beta(\cdot,\cdot)\in C([0,T];L^{2}(\Omega))$ and 
$||u_\be||_{C([0,T];L^{2}(\Omega))} \le  ||u_{0}||_{L^2(\Omega)}.$
\par Let us show next that $u_\be \in C((0,T]; H^1(\Omega))$. In order to do this, let us note that the family $\{\psi_{n}(\beta;\cdot)\}_{n\in\mathbb{N}}$ is orthogonal with respect to the norm $\sqrt{a_\beta(\cdot,\cdot)}$, which is equivalent to the $||\cdot||_{H^1}  $ norm as was stated in Lemma \ref{equiv-norm-H1}. Indeed, for every $k,l\in\mathbb{N}$
\begin{align*}
a_{\beta}( \psi_{n}(\beta;\cdot),\psi_{l}(\beta;\cdot) ) & = 
\int\limits_{\Omega} \nabla \psi_{n}(\beta)\nabla \psi_{l}(\beta) \dd x +  \beta \int\limits_{\partial\Omega} \psi_{n}(\beta) \psi_{l}(\beta)\dd\gamma  \\
&= -\int\limits_{\Omega} \Delta \psi_{n}(\beta)\nabla \psi_{l}(\beta) \dd x +\int\limits_{\partial\Omega} \frac{\p \psi_{n}(\beta)}{\p n} \psi_{l}(\beta)\dd\gamma  +  \beta \int\limits_{\partial\Omega} \psi_{n}(\beta) \psi_{l}(\beta)\dd\gamma \\
 & = \int\limits_{\Omega} \lambda_n(\beta) \psi_{n}(\beta)\psi_{l}(\beta) \dd x = \lambda_{n}(\beta) \delta_{nl} 
\end{align*}
where  $\delta_{kl}$ denotes the Kronecker's delta and we have applied Green's theorem, then the fact that every $\psi_k$ verifies (ii) and (iii) in Theorem \ref{ExBaseH-PbST-beta}  and finally that $\{\psi(\beta)_k\}$ is an orthonormal basis in $L^2(\Omega)$.

\par Hence, for every $t>0$ we have
\begin{align*}
a_{\beta}( u_\beta(\cdot,t),u_\beta(\cdot,t)) & = \sum\limits_{n=1}^{\infty}|(u_{0}, \psi_{n}(\beta;\cdot))|^2 E_\al(-\lambda_{n}(\beta)t^{\al})^2 \lambda_{n}(\beta)\\
 &\leq \sum\limits_{n=1}^{\infty}|(u_{0}, \psi_{n}(\beta;\cdot))|^2 E_\al(-\lambda_{n}(\beta)t^{\al})^2 \frac{\lambda_{n}(\beta)^2}{\lambda_1(\beta)}\leq \frac{C}{\lambda_1(\beta) t^{2\al}} ||u_{0}||^2_{L^2(\Omega)}
\end{align*}
from where we conclude that $u_\beta \in C((0,T];H^1(\Omega))$.
\par Now, for the series 
\begin{align*}
\Delta u_\be(\bx,t) = & \sum\limits_{n=1}^{\infty}(u_{0}, \psi_{n}(\beta;\cdot)) E_\al(-\lambda_{n}t^{\al}) (-\lambda_{n}(\beta))\psi_{n}(\be;\bx,t)
\end{align*}
it holds that 
\begin{align*}
||\Delta u_\be(\cdot,t)||^2_{L^2(\Omega)} = &   \sum\limits_{n=1}^{\infty} |(u_{0}, \psi_{n}(\beta;\cdot))|^2 |E_\al(-\lambda_{n}(\be)t^{\al})|^2 \lambda_{n}^2(\be) \\
%\le & \sum\limits_{n=1}^{\infty} |(u_{0}, \psi_{n}(\beta;\cdot))|^2 \left(\frac{\lambda_{n}}{1+\lambda_{n}t^{\al}}\right)^2 \\
& \le  \frac{C}{t^{2\al}} ||u_{0}||^2_{L^2(\Omega)},
 \end{align*}
which shows that the series
 is uniformly convergent in $t\in[\delta,T]$ for any given $\delta>0$ and thus, $ \Delta u_\be \in C((0,T];L^{2}(\Omega))$.  Taking into account that $\DC u_\be = \Delta u_\be$ in $L^2(\Omega)$ we also have that
\begin{equation*}
\DC u_\be \in C((0,T];L^2(\Omega)).
\end{equation*}

%Since the norm $\{v,v\}^{\frac{1}{2}}$ is equivalent to $||v||_{H^2(\Omega)}$ for $v \in H^2(\Omega)\cap H^1(\Omega)$, we deduce that \begin{equation*} u_\be \in C((0,T];H^2(\Omega)\cap H^1(\Omega)).\end{equation*}

\par To finally see that we have constructed a weak solution, let us check that 
\begin{equation}\label{limite}
 \lim\limits_{t\rightarrow 0}\left\| u_\be(\cdot,t)-u_0 \right\|_{L^2(\Omega)}=0.
\end{equation}
Again we use the series expansions in $L^{2}(\Omega)$ and the properties of the Mittag-Leffler functions:
\begin{align*}
 \left\| u_\be(\cdot,t)-u_0 \right\|^2_{L^2(\Omega)} \le & \sum\limits_{n=1}^\infty \left|(u_{0},\psi_{n}(\be;\cdot))\right|^2 \left|E_\al(-\lambda_{n}(\beta)t^{\al})-1\right|^2 \\
\le & \sum\limits_{n=1}^\infty |(u_{0},\psi_{n}(\be;\cdot))|^2 \left[\left(\frac{C}{1+\lambda_{n}(\beta)t^{\al}} \right)^2+1\right], 
\end{align*}
which is convergent for every $t\in[0,T]$. Hence we can interchange the sum and the limit, and observing that $\lim\limits_{t\rightarrow 0} E_\al(-\lambda_{n}t^{\al})=1$, the desired limit \eqref{limite} holds.

\par The last step is to prove uniqueness. Let us consider $u_{0}\equiv 0$,  we will show that the only solution to the IBVP-$\be$ is the trivial one. Indeed, for each $n\in\mathbb{N}$ the problem \eqref{FODEIVP} has unique trivial solution $u_{n}(\beta,t)=0$ for $t\in [0,T]$. Hence the series that defines $u$ must be the zero function.

\endproof

\section{Convergences}

\par In this section we analyze the convergence of the solution $u_\be$ of the $\beta-$IBVP \eqref{beta-IBVP}, to the solution  $u_D$ of the $D-$IBVP \eqref{PD-beta}. 

\par Let us now study the very interesting relation between the eigenvalues and the eigenfunctions given in Theorem \ref{ExBaseH-PbST-D} and Theorem \ref{ExBaseH-PbST-beta}. In a recent work of Filinovsky, namely \cite{Fi:2014}, the following theorem is proved through variational techniques.
\begin{theorem}\label{teo-conv-autov} For $k\in\bbN$ and $\be>0$ the eigenvalues given in Theorems \ref{ExBaseH-PbST-D} and \ref{ExBaseH-PbST-beta}, enumerated
according to their multiplicities, satisfies the  following estimate 
\begin{equation}\label{desigLambda} 
0\le \la_k^D - \la_k(\be) \le C_{1}\be^{-\frac{1}{2}}(\la_k^D)^2,
\end{equation}
where the constant $C_{1}$ does not depend on $k$.
\end{theorem}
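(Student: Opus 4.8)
The plan is to base everything on the Courant–Fischer min-max characterization of the eigenvalues attached to the forms $a$ and $a_\be$ from \eqref{a(u,v)}--\eqref{a_beta(u,v)}. For the Dirichlet problem of Theorem \ref{ExBaseH-PbST-D},
\[
\la_k^D=\min_{\substack{V\subset H^1_0(\Omega)\\ \dim V=k}}\ \max_{v\in V\setminus\{0\}}\frac{a(v,v)}{\|v\|^2_{L^2(\Omega)}},
\]
and for the Robin problem of Theorem \ref{ExBaseH-PbST-beta},
\[
\la_k(\be)=\min_{\substack{V\subset H^1(\Omega)\\ \dim V=k}}\ \max_{v\in V\setminus\{0\}}\frac{a_\be(v,v)}{\|v\|^2_{L^2(\Omega)}}.
\]
Since $H^1_0(\Omega)\subset H^1(\Omega)$ and the trace of any $v\in H^1_0(\Omega)$ vanishes on $\p\Omega$, we have $a_\be(v,v)=a(v,v)$ on $H^1_0(\Omega)$. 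Hence restricting the minimization for $\la_k(\be)$ to the subspaces of $H^1_0(\Omega)$ can only raise the value, giving $\la_k(\be)\le\la_k^D$, which is the left inequality $0\le\la_k^D-\la_k(\be)$.

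For the right inequality I would first quantify the boundary values of the Robin eigenfunctions. Applying Green's identity to $\Delta\psi_n(\be)+\la_n(\be)\psi_n(\be)=0$ tested against $\psi_n(\be)$, and inserting the Robin condition $\frac{\p}{\p n}\psi_n(\be)=-\be\,\psi_n(\be)$ on $\p\Omega$, one gets, with the normalization $\|\psi_n(\be)\|_{L^2(\Omega)}=1$,
\[
\|\nabla\psi_n(\be)\|^2_{L^2(\Omega)}+\be\int_{\p\Omega}\psi_n(\be)^2\,\dd\gamma=\la_n(\be).
\]
Dropping the nonnegative gradient term and using $\la_n(\be)\le\la_n^D$ yields the crucial bound $\int_{\p\Omega}\psi_n(\be)^2\,\dd\gamma\le \la_n^D/\be$, so the traces of the Robin eigenfunctions are of order $\be^{-1/2}$ in $L^2(\p\Omega)$.

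Next I would convert the Robin eigenfunctions into admissible competitors for the Dirichlet min-max. For $n=1,\dots,k$, let $G_n$ be the harmonic extension of the trace $\psi_n(\be)|_{\p\Omega}$, so that $w_n:=\psi_n(\be)-G_n\in H^1_0(\Omega)$; for $\be$ large these stay linearly independent and span a $k$-dimensional $W_k\subset H^1_0(\Omega)$. Testing the min-max for $\la_k^D$ with $W_k$ gives $\la_k^D\le\max_{w\in W_k\setminus\{0\}}a(w,w)/\|w\|^2_{L^2(\Omega)}$. Writing $w=\sum_n c_n w_n$, $\psi=\sum_n c_n\psi_n(\be)$ and $G=\sum_n c_n G_n$, the orthogonality relations $a_\be(\psi_n,\psi_m)=\la_n(\be)\delta_{nm}$ and $(\psi_n,\psi_m)=\delta_{nm}$ (computed earlier in the excerpt) give $a(\psi,\psi)\le a_\be(\psi,\psi)=\sum_n c_n^2\la_n(\be)\le\la_k(\be)\sum_n c_n^2$, so the leading term of the Rayleigh quotient is already bounded by $\la_k(\be)$; it remains to absorb the corrections generated by $G$.

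I expect the main obstacle to be precisely the control of these corrections: the cross term $\int_\Omega\nabla\psi\cdot\nabla G\,\dd x$, the extension energy $\|\nabla G\|^2_{L^2(\Omega)}$, and the perturbation of the denominator $\|w\|^2_{L^2(\Omega)}$. The difficulty is that the harmonic-extension energy is governed by the $H^{1/2}(\p\Omega)$-norm of the boundary data, whereas Step~2 only supplies its $L^2(\p\Omega)$-norm. Bridging this gap requires trace and interpolation inequalities together with the elliptic regularity of the eigenfunctions (bounding stronger boundary norms of $\psi_n(\be)$ by powers of $\la_n^D$); this is exactly where the additional factor of $\la_k^D$, producing $(\la_k^D)^2$, and the fractional power $\be^{-1/2}$ arise, the half-power reflecting the $L^2(\p\Omega)$-to-$H^{1/2}(\p\Omega)$ interpolation exponent. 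Assembling these estimates into $\la_k^D\le\max_{w\in W_k}a(w,w)/\|w\|^2_{L^2(\Omega)}\le\la_k(\be)+C_1\be^{-1/2}(\la_k^D)^2$ closes the argument, with $C_1$ depending only on $\Omega$ through the trace constants and the constant $\eta_1$ of Lemma \ref{equiv-norm-H1}, and in particular independent of $k$.
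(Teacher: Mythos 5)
First, a point of comparison: the paper does not prove this theorem at all --- inequality \eqref{desigLambda} is quoted directly from Filinovsky \cite{Fi:2014} --- so your attempt has to be judged on its own merits rather than against an argument in the text. The first half of your proposal is correct: the min-max comparison giving $\la_k(\be)\le\la_k^D$ (using that $a_\be(v,v)=a(v,v)$ for $v\in H^1_0(\Omega)$) is sound, and so is the energy identity $\|\nabla\psi_n(\be)\|_{L^2(\Omega)}^2+\be\int_{\p\Omega}\psi_n(\be)^2\,\dd\gamma=\la_n(\be)$ with its consequence $\int_{\p\Omega}\psi_n(\be)^2\,\dd\gamma\le\la_n^D/\be$.

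The right-hand inequality of \eqref{desigLambda} --- which is the entire content of the theorem --- is, however, not proved; your outline stops exactly where the difficulty begins, as you yourself acknowledge. Concretely: (i) you never estimate the corrections $G_n$. The extension energy $\|\nabla G_n\|_{L^2(\Omega)}$ is governed by $\|\psi_n(\be)\|_{H^{1/2}(\p\Omega)}$, whereas your trace bound controls only $\|\psi_n(\be)\|_{L^2(\p\Omega)}$; bridging this by interpolation requires control of tangential boundary derivatives, i.e.\ $H^2$-regularity of $\psi_n(\be)$ up to $\p\Omega$ with constants uniform in $\be$ and $n$, and no such estimate is stated, let alone proved. (ii) The linear independence of the competitors $w_n=\psi_n(\be)-G_n$, and more importantly a quantitative lower bound on the denominator $\|w\|_{L^2(\Omega)}^2$ in terms of $\sum_n c_n^2$, is asserted ``for $\be$ large'' but never established; it hinges on the same unproven smallness of the $G_n$. (iii) Most seriously, the independence of $C_1$ from $k$ is precisely the delicate point, and your scheme does not deliver it: any estimate of $G=\sum_n c_nG_n$ through the individual $G_n$ produces a sum over $n\le k$, e.g.\ $\bigl(\sum_{n\le k}\la_n^D/\be\bigr)^{1/2}\le(k\la_k^D/\be)^{1/2}$. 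Even in the most favorable version of your argument --- exploiting harmonicity of $G$, which gives $a(w,w)=a(\psi,\psi)-a(G,G)\le\la_k(\be)\sum_n c_n^2$ so that only $\|G\|_{L^2(\Omega)}\le C\|G\|_{L^2(\p\Omega)}$ is needed --- one arrives at a bound of the form $C\be^{-1/2}k^{1/2}(\la_k^D)^{3/2}$, and converting $k^{1/2}$ into a power of $\la_k^D$ via Weyl asymptotics ($k\sim c_d(\la_k^D)^{d/2}$) stays below $(\la_k^D)^2$ only when $d\le 2$. So even granting the missing regularity estimates, the argument as designed does not yield a $k$-uniform constant in general dimension. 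A minor further point: the theorem claims the estimate for every $\be>0$ while your construction needs $\be$ large; this is harmless (for $\be\le C_1^2(\la_k^D)^2$ the right-hand side of \eqref{desigLambda} exceeds $\la_k^D$ and the inequality is trivial), but it must be said. As it stands, the proposal is a plausible program whose quantitative core --- precisely what \cite{Fi:2014} supplies --- is missing.
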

From \eqref{desigLambda} it is obvious that
\begin{equation}\label{lim-lambda(beta)}
\lim\limits_{\be\rightarrow \infty}\la_k(\be)=\lambda_k^D, \text{for every } \, k\in \bbN.
\end{equation}

The next Theorem deals with the weak convergence related to the eigenfunctions.

Let us observe that a similar result is given in an even more recent work \cite{Fi:2017} of Filinovsky, but we present a different proof according to our problems. 

 % \eqref{PD-beta} and \eqref{EiVaPb-D}.
\begin{theorem}\label{teo-weak-conv-autof} Let $\left\{\varphi_k\right\}_k$ and $\left\{\la_k^D\right\}_k$ be the sequence of eigenfunctions and eigenvalues of the  Dirichlet  problem  given in Theorem \ref{ExBaseH-PbST-D}, where $\la_1^D<\la_2^D\leq \ldots \rightarrow \infty$. And let $\left\{\psi_k(\be)\right\}_k$ and $\left\{\la_k(\be)\right\}_k$ be the sequence of eigenfunctions and eigenvalues of the  Robin eigenvalue problem given in  \eqref{ExBaseH-PbST-beta}, where $\la_1(\be)<\la_2(\be)\leq \ldots \rightarrow \infty$. Then for each $k=1,2,\ldots$ 
\begin{equation}\label{conv-debil}
\psi_k(\be) \rightharpoonup \varphi_k \quad \text{weak in } H^1(\Omega),\quad \text{when } \be \rightarrow \infty.  
\end{equation}   

\end{theorem}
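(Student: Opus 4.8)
The plan is to exploit the variational (weak) form of the Robin eigenvalue problem, namely that $\psi_k(\be)$ satisfies $a_\be(\psi_k(\be),v)=\lambda_k(\be)(\psi_k(\be),v)$ for all $v\in H^1(\Omega)$, which follows from Green's formula exactly as in the computation in the proof of Theorem \ref{sol-beta-IBVP}. First I would establish that the family $\{\psi_k(\be)\}_{\be}$ is bounded in $H^1(\Omega)$ for each fixed $k$. Testing the weak equation with $v=\psi_k(\be)$ and using $\|\psi_k(\be)\|_{L^2(\Omega)}=1$ gives
\begin{equation*}
\int_\Omega |\nabla \psi_k(\be)|^2 \dd x + \be \int_{\partial\Omega} \psi_k(\be)^2 \dd\gamma = a_\be(\psi_k(\be),\psi_k(\be)) = \lambda_k(\be).
\end{equation*}
By Theorem \ref{teo-conv-autov} we have $\lambda_k(\be)\le \lambda_k^D$ for every $\be$, so $\|\nabla\psi_k(\be)\|_{L^2(\Omega)}^2\le \lambda_k^D$ and hence $\|\psi_k(\be)\|_{H^1(\Omega)}^2\le \lambda_k^D+1$ uniformly in $\be$. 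The same identity yields the crucial boundary estimate $\int_{\partial\Omega}\psi_k(\be)^2\dd\gamma \le \lambda_k^D/\be \to 0$ as $\be\to\infty$.

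Next, since $H^1(\Omega)$ is reflexive and $\{\psi_k(\be)\}$ is bounded, from any sequence $\be_j\to\infty$ I would extract a subsequence (not relabeled) with $\psi_k(\be_j)\rightharpoonup \chi$ weakly in $H^1(\Omega)$. By the Rellich--Kondrachov theorem $\psi_k(\be_j)\to\chi$ strongly in $L^2(\Omega)$, so $\|\chi\|_{L^2(\Omega)}=1$ and in particular $\chi\neq 0$. Because the trace operator $\gamma\colon H^1(\Omega)\to L^2(\partial\Omega)$ is bounded and hence weakly continuous, $\gamma\psi_k(\be_j)\rightharpoonup \gamma\chi$ in $L^2(\partial\Omega)$; but the boundary estimate shows $\gamma\psi_k(\be_j)\to 0$ strongly in $L^2(\partial\Omega)$, whence $\gamma\chi=0$ and $\chi\in H_0^1(\Omega)$. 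Passing to the limit in the weak equation tested against an arbitrary $v\in C_c^\infty(\Omega)$, for which the boundary term $\be\int_{\partial\Omega}\psi_k(\be_j)v\,\dd\gamma$ vanishes identically, and using $\lambda_k(\be_j)\to\lambda_k^D$ together with the weak $H^1$ and strong $L^2$ convergences, I obtain $\int_\Omega \nabla\chi\cdot\nabla v\,\dd x = \lambda_k^D\int_\Omega \chi v\,\dd x$. By density this holds for all $v\in H_0^1(\Omega)$, so $\chi\in H_0^1(\Omega)$ is a Dirichlet eigenfunction for the eigenvalue $\lambda_k^D$.

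Finally I would identify $\chi$ with $\varphi_k$. When $\lambda_k^D$ is simple, its eigenspace is spanned by $\varphi_k$, so $\chi=\pm\varphi_k$, and fixing the orientation by the normalization convention $(\psi_k(\be),\varphi_k)\ge 0$ forces $\chi=\varphi_k$. In the presence of multiplicities one argues on a whole block of indices simultaneously: extracting a common subsequence by a diagonal procedure, the limits $\chi_k$ inherit $L^2$-orthonormality from the $\psi_k(\be)$ and each lies in the eigenspace of its limiting eigenvalue, so within each eigenspace the $\{\chi_k\}$ form an orthonormal system matching $\{\varphi_k\}$ up to the chosen basis. Since in every case the subsequential weak limit is uniquely determined and independent of the extracted subsequence, a standard subsequence argument upgrades the convergence to the full family, giving $\psi_k(\be)\rightharpoonup\varphi_k$ in $H^1(\Omega)$ as $\be\to\infty$. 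The main obstacle is precisely this last identification step: ensuring that the $k$-th Robin eigenfunction converges to the $k$-th Dirichlet eigenfunction, with the correct index and sign, rather than to some other element of the limiting eigenspace. This requires combining the preservation of $L^2$-orthonormality under strong convergence with the index-by-index convergence of the eigenvalues furnished by Theorem \ref{teo-conv-autov}.
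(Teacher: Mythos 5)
Your proof is correct and reaches the same three milestones as the paper's argument --- a uniform $H^1(\Omega)$ bound, vanishing boundary trace in the limit, and identification of the weak limit as a Dirichlet eigenfunction for $\la_k^D$ --- but by a genuinely different and in places cleaner route. For the compactness step the paper does not test with $\psi_k(\be)$ itself: it inserts $v=\psi_k(\be)-\varphi_k$ into the Robin variational identity \eqref{EV-1}, subtracts $a(\varphi_k,\psi_k(\be)-\varphi_k)$, splits $\be=(\be-1)+1$, and invokes the inequality \eqref{LemaTop} of Lemma \ref{equiv-norm-H1} to obtain $\eta_1\|\psi_k(\be)-\varphi_k\|^2_{H^1(\Omega)}+(\be-1)\int_{\partial\Omega}\psi_k(\be)^2\,\dd\gamma\le C_k\|\psi_k(\be)-\varphi_k\|_{H^1(\Omega)}$, from which boundedness and the boundary decay both follow; your energy identity $a_\be(\psi_k(\be),\psi_k(\be))=\la_k(\be)\le\la_k^D$ (the last inequality by Theorem \ref{teo-conv-autov}) obtains both facts in one line, with explicit constants and with no need of Lemma \ref{equiv-norm-H1}. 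Your route also repairs a real gap in the paper: after extracting the weak limit, the paper concludes $\xi_k=\varphi_k$ from the ``uniqueness'' of the Dirichlet variational problem \eqref{EV-0}, but that problem is not uniquely solvable --- $0$ and every element of the $\la_k^D$-eigenspace solve it --- so without your Rellich--Kondrachov step (which gives $\|\chi\|_{L^2(\Omega)}=1$, hence $\chi\neq 0$) the argument cannot even rule out the zero limit. The price both proofs pay is the final identification: since eigenfunctions are determined only up to sign (and, for a multiple eigenvalue, up to an orthonormal change of basis of its eigenspace), the convergence $\psi_k(\be)\rightharpoonup\varphi_k$ can only hold under a normalization convention such as your $(\psi_k(\be),\varphi_k)\ge 0$, and in the presence of multiplicity only for a suitably adapted basis $\{\varphi_k\}$; you name this obstacle explicitly and sketch the block/diagonal argument, whereas the paper passes over it silently. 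Your closing remark that the subsequential limit being independent of the subsequence upgrades convergence to the full family $\be\to\infty$ is also needed, and is likewise only implicit in the paper.
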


\proof For each fixed $k \in \bbN$ let $\la_k(\beta)$ and $\la_k^D$ be the eigenvalues given in Theorems \ref{ExBaseH-PbST-beta} and \ref{ExBaseH-PbST-D}, respectively. Being $\varphi_k$ a function that verifies $(i)$ and $(ii)$ of Theorem \ref{ExBaseH-PbST-D}, and  $\psi_k(\beta)$ a function that verifies $(ii)$ and $(iii)$ of Theorem \ref{ExBaseH-PbST-beta}, we can consider the bilinear forms given in \eqref{a(u,v)} and \eqref{a_beta(u,v)} and affirm that:\\
i) The eigenfunction $\varphi_k$ is the unique solution to the variational problem: \textsl{Find the function} $u \in H^1_0(\Omega)$ such that   
\begin{equation}\label{EV-0}
a\left(u,v\right)=\la_k^D(u,v) \quad \text{ for every } \, v\in H^1_0(\Omega). 
\end{equation}
%where $a:H^1_0(\Omega) \times H^1_0(\Omega)$ is the bilinear form given in \eqref{a(u,v)}
%and  $(u,v)$ is the classical inner product in $L^2(\Omega)$.\\
ii) The eigenfunction  $\psi_k(\beta)$ is the unique solution to the problem:
\textsl{Find the function} $u \in H^1(\Omega)$ such that   
\begin{equation}\label{EV-1}
a_\beta\left(u,v\right)=\la_k(\beta)(u,v) \quad \text{ for every } \, v\in H^1(\Omega).
\end{equation}
%where $a_\beta$ is the bilinear form given in \eqref{a_beta(u,v)}. 

\noindent Replacing $v=\psi_k(\be)-\varphi_k \in H^1(\Omega)$ in \eqref{EV-1}, subtracting $a(\varphi_k,\psi_k(\be)-\varphi_k)$ to both members  we get
\begin{align*}a(\psi_k(\be)-\varphi_k,\psi_k(\be)-\varphi_k)+ \int\limits_{\partial\Omega} \left(\psi_k(\be)-\varphi_k\right)^2 \dd\gamma + (\be-1) \int\limits_{\partial\Omega} \left(\psi_k(\be)-\varphi_k\right)^2 \dd\gamma 
\end{align*}
\begin{align}\label{EV-2}  
\qquad \qquad \qquad  \qquad = \la_k(\beta)(\psi_k(\beta),\psi_k(\be)-\varphi_k)
-a(\varphi_k,\psi_k(\be)-\varphi_k).
\end{align} 

From the continuity of the bilinear form  $a$, the inequality \eqref{LemaTop},  being $\varphi_k=0$ in $\partial \Omega$, splitting $\be=(\be-1)+1$ for $\be>1$ as in \cite{Ta:1979},  and applying Theorem \ref{teo-conv-autov} we obtain
   
$$\hspace{-5cm} \eta_1||\psi_k(\be)-\varphi_k||_{H^1(\Omega)}^2 + (\be-1) \int\limits_{\partial\Omega} \left(\psi_k(\be)\right)^2 \dd\gamma \leq $$	
\begin{equation}\label{EV-3}
\hspace{2cm}\leq M_k ||\psi_k(\beta) ||_{L^2(\Omega)} ||\psi_k(\be)-\varphi_k ||_{L^2(\Omega)} + C||\varphi_k||_{H^1(\Omega)}||\psi_k(\be)-\varphi_k||_{H^1(\Omega)}. 
\end{equation}	

Naming $C_k:=M_k   +C||\varphi_k||_{H^1(\Omega)}$ we get 
\begin{equation}\label{EV-3}
\eta_1||\psi_k(\be)-\varphi_k||_{H^1(\Omega)}^2 + (\be-1) \int\limits_{\partial\Omega} \psi_k(\be)^2 \dd\gamma \leq C_k
||\psi_k(\be)-\varphi_k||_{H^1(\Omega)}. 
\end{equation}	
From \eqref{EV-3} we can state that 
\begin{equation}\label{EV-4}
||\psi_k(\be)-\varphi_k||_{H^1(\Omega)}\leq \frac{C_k}{\eta_1},
\end{equation}	
and from \eqref{EV-3} and  \eqref{EV-4}
\begin{equation}\label{EV-5}
(\be-1) \int\limits_{\partial\Omega} \psi_k(\be)^2 \dd\gamma \leq   \frac{C_k^2}{\eta_1}.
\end{equation}	
Inequality \eqref{EV-4} implies that $\left\{ \psi_k(\be) \right\}_\be$ is bounded in $H^1(\Omega)$, then there exists $\xi_k \in H^1(\Omega)$ such that 
\begin{equation}\label{EV-6}
\psi_k(\be) \rightharpoonup \xi_k \quad \text{in } H^1(\Omega) \text{weak },\quad \text{when } \be \rightarrow \infty.  
\end{equation}	
By taking the limit when $\be \rightarrow \infty$ in \eqref{EV-5} and  by using the lower semicontinuity of $v\rightarrow \int\limits_{\partial\Omega} v^2 \dd\gamma $, it holds that $\left. \xi_k \right|_{\partial\Omega}=0.$  Then 
\begin{equation}\label{EV-7}
 \xi \, \in \, H^1_0(\Omega).  
\end{equation}	
Finally, note that from \eqref{EV-1} we have that 
\begin{equation}\label{EV-8}
a\left(\psi_k(\beta),w\right)=\la_k(\beta)(\psi_k(\beta),w) \quad \text{ for every } \, w\in H^1_0(\Omega),
\end{equation}
Taking the limit when $\beta \rightarrow \infty$ in \eqref{EV-8} and using \eqref{EV-7} and Theorem \ref{teo-conv-autov} we obtain
\begin{equation}\label{EV-8'}
a\left(\xi_k,w\right)=\la_k^D(\xi_k,w) \quad \text{ for every } \, w\in H^1_0(\Omega),
\end{equation}
 Then, from \eqref{EV-8'} and the uniqueness of \eqref{EV-0} we conclude that $\xi_k=\varphi_k$ and the thesis holds. 
\endproof

\bigskip
\begin{theorem} The family of solutions $\left\{u_\be\right\}$ of the $\be-IBVP$ \eqref{beta-IBVP}, converges to the solution $u_D$ to the $D-IBVP$  in $L^2(\Omega)$  when $\be \to \infty$, for every $t\in (0,T)$. 
\end{theorem}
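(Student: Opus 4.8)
The plan is to compare the eigenfunction expansions \eqref{series-u-beta} of $u_\be$ and \eqref{series-u-D} of $u_D$ directly, truncating both at a common level $N$. Fixing $t\in(0,T)$, write $a_n(\be)=(u_0,\psi_n(\be;\cdot))$ and $b_n=(u_0,\varphi_n)$, and denote by $u_\be^N$ and $u_D^N$ the partial sums up to $n=N$ of \eqref{series-u-beta} and \eqref{series-u-D}. Then
\begin{equation*}
\|u_\be(\cdot,t)-u_D(\cdot,t)\|_{L^2(\Omega)}\le \|u_\be-u_\be^N\|_{L^2(\Omega)}+\|u_\be^N-u_D^N\|_{L^2(\Omega)}+\|u_D^N-u_D\|_{L^2(\Omega)},
\end{equation*}
and I would fix $\e>0$, choose $N$ large enough to control the two tail terms, and only then let $\be\to\infty$ to kill the finite middle term.

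For the finite head I would argue term by term. Theorem \ref{teo-conv-autov} gives $\la_n(\be)\to\la_n^D$, so by continuity of the Mittag-Leffler function $E_\al(-\la_n(\be)t^\al)\to E_\al(-\la_n^D t^\al)$. Theorem \ref{teo-weak-conv-autof} provides $\psi_n(\be)\rightharpoonup\varphi_n$ weakly in $H^1(\Omega)$; since $\Omega$ is bounded with smooth boundary, the embedding $H^1(\Omega)\hookrightarrow L^2(\Omega)$ is compact, which upgrades this to strong convergence $\psi_n(\be)\to\varphi_n$ in $L^2(\Omega)$. Consequently $a_n(\be)\to b_n$, and each summand converges in $L^2(\Omega)$. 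As $N$ is fixed, $\|u_\be^N-u_D^N\|_{L^2(\Omega)}\to 0$ as $\be\to\infty$.

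The Dirichlet tail is immediate: Parseval for the Hilbert basis $\{\varphi_n\}$ gives $\sum_n b_n^2=\|u_0\|^2_{L^2(\Omega)}$, and with $|E_\al(-\la_n^D t^\al)|\le 1$ from \eqref{ML<1} one has $\|u_D-u_D^N\|^2_{L^2(\Omega)}\le\sum_{n>N}b_n^2$, which is below $\e^2$ for $N$ large, uniformly in $\be$.

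The delicate step, and the one I expect to be the main obstacle, is the Robin tail $\|u_\be-u_\be^N\|^2_{L^2(\Omega)}=\sum_{n>N}a_n(\be)^2 E_\al(-\la_n(\be)t^\al)^2$, for which no bound that is simultaneously small and uniform in $\be$ is apparent. The resolution is to exploit that $\{\psi_n(\be;\cdot)\}$ is itself a Hilbert basis of $L^2(\Omega)$ for every $\be$, so Parseval yields $\sum_n a_n(\be)^2=\|u_0\|^2_{L^2(\Omega)}$. Subtracting the finite head, which I already know converges, gives
\begin{equation*}
\sum_{n>N}a_n(\be)^2=\|u_0\|^2_{L^2(\Omega)}-\sum_{n=1}^N a_n(\be)^2 \xrightarrow[\be\to\infty]{} \|u_0\|^2_{L^2(\Omega)}-\sum_{n=1}^N b_n^2=\sum_{n>N}b_n^2<\e^2,
\end{equation*}
so that, with $N$ already fixed, the Robin tail is below $2\e^2$ for all large $\be$ after using \eqref{ML<1} again. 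The crux is thus the interchange of the limits in $N$ and in $\be$: the tail of $u_\be$ is tamed not by a uniform estimate but through the Parseval identity, which dictates the order of quantifiers — choose $N$ from the Dirichlet data first, then send $\be\to\infty$. Collecting the three bounds and letting $\e\to0$ finishes the proof for each fixed $t\in(0,T)$.
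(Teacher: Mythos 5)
Your proof is correct, and its skeleton is the same as the paper's: truncate both series at a common level $N$, split into a finite head, a Dirichlet tail, and a Robin tail, and treat the head termwise using eigenvalue convergence (Theorem \ref{teo-conv-autov}), continuity of $E_\al$, and the upgrade of the weak $H^1(\Omega)$ convergence of Theorem \ref{teo-weak-conv-autof} to strong $L^2(\Omega)$ convergence via compact embedding --- these are precisely the paper's items (I)--(IV). Where you genuinely depart from the paper is in the Robin tail, and your version is the more rigorous one. The paper asserts, ``from \eqref{cota-uD-L2} and \eqref{cota-u-beta-L2}'', that a single $N$ can be chosen making both \eqref{uD-N1} and \eqref{ube-N2} smaller than $\varepsilon^2$; but the Robin tail depends on $\be$, and the uniform bound $\sum_n |(u_0,\psi_n(\be;\cdot))|^2 \le \|u_0\|^2_{L^2(\Omega)}$ does not by itself produce an $N$ valid for all $\be$ --- yet the rest of the paper's argument needs $N$ fixed before sending $\be\to\infty$. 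Your Parseval identity argument supplies exactly the missing justification: choose $N$ from the Dirichlet coefficients alone, and then use
\begin{equation*}
\sum_{n>N} |(u_0,\psi_n(\be;\cdot))|^2 \;=\; \|u_0\|^2_{L^2(\Omega)}-\sum_{n=1}^{N}|(u_0,\psi_n(\be;\cdot))|^2 \;\xrightarrow[\be\to\infty]{}\; \sum_{n>N}|(u_0,\varphi_n)|^2\;<\;\varepsilon^2,
\end{equation*}
where the limit of the finite sum is legitimate because $(u_0,\psi_n(\be;\cdot))\to(u_0,\varphi_n)$ for each $n\le N$ by the strong $L^2$ convergence of the eigenfunctions. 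Combined with \eqref{ML<1}, this makes the Robin tail small for all sufficiently large $\be$, which is all the argument requires, and it puts the quantifiers in the correct order (first $N$, then $\be$). So: same decomposition and same key lemmas, but your handling of the tail term is a genuine repair of a step the paper leaves elliptical; the only thing to make explicit in your write-up is that the coefficient convergence used in the Parseval step is the same compact-embedding fact used for the head, so it must be established before that step is invoked.
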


\begin{proof} 
Fix $t>0$. From \eqref{cota-uD-L2}  and \eqref{cota-u-beta-L2}  we can choose a  natural number $N$  such that 
\begin{equation}\label{uD-N1}
 \sum\limits_{k=N+1}^{\infty} |(u_{0}, \varphi_{k})|^2 |E_\al(-\lambda^D_{k}t^{\al})|^2 < \varepsilon^2
\end{equation}
and
\begin{equation}\label{ube-N2}
 \sum\limits_{k=N+1}^{\infty} |(u_{0}, \psi_{k}(\be))|^2 |E_\al(-\lambda_{k}(\be)t^{\al})|^2 < \varepsilon^2.
\end{equation}

Therefore, we have:
\begin{align} \nonumber
\left\|u_D(\cdot,t)-u_\be(\cdot, t)\right\|_{L^2(\Omega)} \leq & \left( \left\|A_N\right\|_{L^2(\Omega)} + \left\|B_N\right\|_{L^2(\Omega)} + \left\|C_N\right\|_{L^2(\Omega)}\right) \\ \label{uD-ube-1}
\leq &  \left\|A_N\right\|_{L^2(\Omega)} +2\varepsilon,
\end{align}
where 
\begin{align}\label{A_N}
A_N:= & \sum\limits_{k=1}^{N} \left[(u_{0}, \var_{k})E_\al(-\lambda^D_{k}t^{\al})\varphi_k - (u_{0}, \psi_{k}(\be))E_\al(-\lambda_{k}(\be)t^{\al})\psi_k(\beta)\right], \\ \label{B_N}
B_N:= &  \sum\limits_{k=N+1}^{\infty} (u_{0}, \var_{k})E_\al(-\lambda^D_{k}t^{\al})\varphi_k, \\ \label{C_N}
C_N:= & \sum\limits_{k=N+1}^{\infty} (u_{0}, \psi_{k}(\be))E_\al(-\lambda_{k}(\be)t^{\al})\psi_{k}(\be),
\end{align}
and we have applied inequalities \eqref{uD-N1} and \eqref{ube-N2}. Now

\begin{align} \nonumber
\left\|A_N\right\|_{L^2(\Omega)}   \leq & \sum\limits_{k=1}^{N} \left[\left\|(u_{0}, \var_{k})E_\al(-\lambda^D_{k}t^{\al})\varphi_k - (u_{0}, \psi_{k}(\be))E_\al(-\lambda^D_{k}t^{\al})\var_k\right\|_{L^2(\Omega)}  \right. \\ \nonumber
 & + \left\|(u_{0},\psi_{k}(\be))E_\al(-\lambda^D_{k}t^{\al})\varphi_k - (u_{0}, \psi_{k}(\be))E_\al(-\lambda_{k}(\be)t^{\al})\varphi_k \right\|_{L^2(\Omega)}   \\ \label{A_N-1}
& + \left.  \left\|(u_{0}, \psi_{k}(\be))E_\al(-\lambda_{k}(\be)t^{\al})\varphi_k - (u_{0}, \psi_{k}(\be))E_\al(-\lambda_{k}(\be)t^{\al})\psi_k(\beta) \right\|_{L^2(\Omega)}  \right].
\end{align}

Now, for each $k \in \left\{1,\dots,N\right\}$ we have:  
%\begin{equation}\label{A_N-2}
%\begin{array}{lcl}
%(I)& \left\|(u_{0}, \var_{k}( \cdot))-(u_{0}, \psi_{k}(\be; \cdot))\right\|_{L^2(\O,ega)}<\frac{\varepsilon}{9N} & \text{ if } \be>\be_{I,N}^k,\\
%(II)& \left\|(u_{0}, \psi_{k}(\be; \cdot))\right\|_{L^2(\Omega)}<1 & \text{ if } \be>\be_{II,N}^k,\\
%(III)& \left\|E_\al(-\lambda^D_{k}t^{\al})\varphi_k - E_\al(-\lambda_{k}(\be)t^{\al})\right\|<\frac{\varepsilon}{9N} & \text{ if } \be>\be_{III,N}^k,\\
%(IV) & \left\| \var_k -\psi_k(\beta) \right\|_2<\frac{\varepsilon}{9N} & \text{ if } \be>\be_{IV,N}^k.
%\end{array}
%\end{equation}
\begin{itemize}
\item[(I)] From Theorem \ref{teo-weak-conv-autof}, there exists $\be_{I,N}^k>0$ such that if $\be>\be_{I,N}^k$, then 
$$|(u_{0}, \var_{k})-(u_{0}, \psi_{k}(\be))|<\frac{\varepsilon}{N}.$$
\item[(II)] Using again Theorem \ref{teo-weak-conv-autof} we have that the real values $|(u_{0}, \psi_{k}(\be))|$ are bounded for $\beta$ big enoguh, that is,  there exist $\be_{II,N}^k>0$ and $M_k>0$ such that if $\be>\be_{II,N}^k$, then
$$|(u_{0}, \psi_{k}(\be))|<M_k.$$
\item[(III)] From the continuity of the Mittag-Leffler functions and Theorem \ref{teo-conv-autov}, there exists $\be_{III,N}^k>0$ such that if $\be>\be_{III,N}^k$, then $$|E_\al(-\lambda^D_{k}t^{\al}) - E_\al(-\lambda_{k}(\be)t^{\al})|<\frac{\varepsilon}{N}.$$
\item[(IV)] From Theorem \ref{teo-weak-conv-autof}, since weak convergence in $H^1$ gives strong convergence in $L^2$, there exists $\be_{IV,N}^k>0$ such that if $\be>\be_{IV,N}^k$, then $$\left\| \var_k -\psi_k(\beta) \right\|_{L^2(\Omega)}<\frac{\varepsilon}{N}.$$
\end{itemize}

Finally, taking $M= \max_{1\leq k \leq N}M_k$ and $\bar{\be}=\max\left\{ \be_{l,N}^k; l=I,II,III,IV; \, k=1, \dots,N\right\}$  we deduce that for $\beta>\bar{\be}$,
\begin{equation}\label{A_N-3}
\left\|A_N\right\|_2<(2M+1)\varepsilon.
\end{equation}
From \eqref{uD-ube-1} and  \eqref{A_N-3} we conclude that, for every $t\in (0,T)$ it holds that  \linebreak $\lim\limits_{\beta \rightarrow \infty} \left\|u_D(\cdot,t)-u_\be(\cdot, t)\right\|_{L^2(\Omega)}=0$, as desired. 
\end{proof}

\section{The one-dimensional case}

\par In order to illustrate the convergence result by the aid of some software, we set ourselves in the following one-dimensional setting: let the domain be the real unit interval $\Omega=[0,1]$, and let $T>0$ to be fixed, say $T=1$. Here, we have that the boundary of the domain consists of two points, namely $\partial\Omega=\{0,1\}$. For simplicity we will write $x\in\mathbb{R}$ instead of $\bx$,  $\Delta u(\bx,t) = u_{xx}(x,t)$, etc. For fixed $0<\alpha<1$ and each $\beta>0$, the initial-boundary value problems to be considered are the following
\begin{itemize}
\item The one dimensional Dirichlet problem, which we call D-IVBP-1d:
\begin{equation}\label{D-IBVP-1d}
\begin{array}{clll}
     (i)  &  _0^CD^\alpha_t u(x,t)= u_{xx}(x,t), &   x \in [0,1], \,  0<t<T,   \,\\
     (ii)  &  u(x,0)=u_{0}(x), &   x \in [0,1], \, \,\\
     (iii) &   u(0,t)=u(1,t)=0, &     0<t<T.                      \end{array}
                                             \end{equation}
\item The one dimensional Robin problem, which we call $\beta-$IBVP-1d
\begin{equation}{\label{beta-IBVP-1d}}
\begin{array}{clll}
     (i)  &  _0^CD^\alpha_t u(x,t)= u_{xx}(x,t), &   x \in [0,1], \,  0<t<T,  \,\\
     (ii)  &  u(x,0)=u_{0}(x), &   x \in [0,1], \, \,\\
     (iii) &  -u_{x}(0,t)+\beta u(0,t)=0, &     0<t<T, \, \,\\
           &  u_{x}(1,t)+\beta u(1,t)=0, &     0<t<T. \, \,\\
        \end{array}
\end{equation}
\end{itemize}

\par Let us construct the solutions to both problems by proceding like in Section 3.

\par For the D-IVBP-1d problem we have the next Sturm-Liouville type problem with Dirichlet condition:
\begin{equation}\label{D-SL}
D-SL\left\{ \begin{array}{cl}
        \psi''(x) + \lambda \psi(x) = 0, & 0<x<1, \\
        \psi(0)=\psi(1)=0. & 
\end{array} \right.
\end{equation}
The solutions to this problems are, for $k\in\mathbb{N}$, the pairs of eigenvalues and eigenfunctions $(\lambda_k,\psi_k)$ given by $\lambda_{k}=k^{2}\pi^{2}$ and $\psi_{k}(x)=\sqrt{2}\sin(k\pi x)$.
The solution to the ordinary FDE linked to the time variable  is given by the Mittag-Leffler function: $\eta(t)=E_\al(-\lambda t^{\alpha})$. Finally, we couple this solutions to get that the formal solution to the D-IVBP-1d is given by
$$u(x,t)=\sum\limits_{k=1}^{\infty} \left( u_{0},\sqrt{2}\sin(k\pi\cdot)\right) \sqrt{2}\sin(k\pi x)E_\al(-k^2\pi^2 t^\alpha).$$

\par For problem $\beta$-IVBP-1d, the Sturm-Liouville type problem with Robin condition is
\begin{equation}\label{R-SL}
R-SL\left\{ \begin{array}{ll}
        \psi''(x) + \lambda \psi(x) = 0,& 0<x<1  \\
        -\psi'(0)+\beta\psi(0)=0,& \\
				\psi'(1)+\beta\psi(1)=0.& \\
\end{array} \right.
\end{equation}
Working  with the general solutions to the second order differential equation above
 $$\psi(x)=A\sin(\sqrt{\lambda}x)+B\cos(\sqrt{\lambda}x),$$ and with the Robin conditions we get the following system:
\begin{equation*}
\left\{ \begin{array}{l}
        -A\sqrt{\lambda}+\beta B =0, \\
				(A\sqrt{\lambda}+\beta B)\cos(\sqrt{\lambda})+(\beta A - B \sqrt{\lambda})\sin(\sqrt{\lambda})=0,
\end{array} \right.
\end{equation*}
or equivalently
\begin{equation*}                                                                                                                            
\left\{ \begin{array}{l}
        B = A \frac{\sqrt{\lambda}}{\beta}, \\ \label{func.h.lam.k.be.1d}
				2\sqrt{\lambda}\cos(\sqrt{\lambda})+\left(\beta - \frac{\lambda}{\beta}\right)\sin(\sqrt{\lambda})=0.
\end{array} \right.
\end{equation*}
Note that from the last equation we get an implicit formula for $\lambda$, namely $\tan(\sqrt{\lambda})=\frac{2\sqrt{\lambda}\beta}{\lambda-\beta^2}$, from where it can be  observed that the equation becomes $\sin(\sqrt{\lambda})=0$, when $\beta$ approaches infinity. This is precisely the equation that defines the eigenvalues for the Dirichlet case.

Now, let us consider the functions 
\begin{equation}\label{func.h.lam.k.be.1d}
h_{\beta}(\lambda):=2\sqrt{\lambda}\cos(\sqrt{\lambda})+\left(\beta - \frac{\lambda}{\beta}\right)\sin(\sqrt{\lambda}).
\end{equation}
%\begin{equation}\label{lam.k.be.1d}
%.
%\end{equation}
We know from Theorems \ref{ExBaseH-PbST-beta} and \ref{teo-conv-autov} that for fixed $\beta$, the values of $\lambda$ that satisfy $h_\beta(\lambda)=0$ are countable, say $\{\lambda_{k}(\beta)\}$ and moreover, they converge to $\{k^2\pi^2\}$ when $\beta$ goes to infinity. Hence we can isolate the roots of the functions $h_{\beta}$ around $k^2\pi^2$ and approximate them numerically using some software. We used SageMath to compile the following Table:
\begin{table}[ht]
\centering
\resizebox{\textwidth}{!}{\begin{tabular}{c|ccccc||c}
			$\lambda_{k}(\beta)$ & $\beta = 10^2$ & $\beta = 10^3$ & $\beta = 10^4$ & $\beta = 10^5$ & $\beta = 10^6$ & $\lambda_{k}=k^2\pi^2$\\ \hline
			k=1 & 9.486473204354914 &  9.830244232285152 &  9.865657743495532 &  9.869209628756735 &  9.869564922790271 &  9.869604401089359\\
			k=2 & 37.947300586356484 &  39.320978472148354 &  39.462630975538794 &  39.47683851502818 &  39.47825969116076 &  39.47841760435743\\
			k=3 & 85.38668247637567 &  88.47220734834096 &  88.79091970080098 &  88.82288665881923 &  88.8260843051117 &  88.82643960980423\\
			k=4 & 151.81154366014752 &  157.28393857454202 &  157.85052392706672 &  157.90735406013744 &  157.91303876464283 &  157.9136704174297\\
			k=5 & 237.23142256188495 &  245.75618294799932 &  246.64144366523558 &  246.73024071899425 &  246.73912306975478 &  246.7401100272340\\
			k=6 & 341.6583215827787 &  353.888954347627 &  355.1636789293196 &  355.2915466354033 &  355.30433722044694 &  355.3057584392169\\
			k=7 & 465.1065236769831 &  481.6822697315615 &  483.41722973644585 &  483.5912718093815 &  483.6086812167196 &  483.6106156533786\\ 
			k=8 & 607.5923811691522 &  629.1361491341754 &  631.4020961068547 &  631.6294162409495 &  631.6521550585727 &  631.6546816697189\\
			k=9 & 769.1340834087115 &  796.2506156625528 &  799.118278063901 &  799.4059799301307 &  799.4347587460061 &  799.4379564882380\\
			k=10 & 949.7514101063597 &  983.0256954924237 &  986.5657756340526 &  986.920962876951 &  986.9564922790203 &  986.9604401089359
	\end{tabular}}\caption{\footnotesize{Values for $\lambda_{k}(\beta)$ found as roots of function \eqref{func.h.lam.k.be.1d}}.}
\end{table} 

\newpage
\noindent Also by the aid of SageMath we were able to visualize the functions $h_{\beta}$ and its roots in Figure \ref{fig:graf1}. A zoomed version around $\pi^2$ can also be seen in Figure \ref{fig:graf2}.
\begin{figure}[h!]
  \centering
  \includegraphics[width=0.7\linewidth]{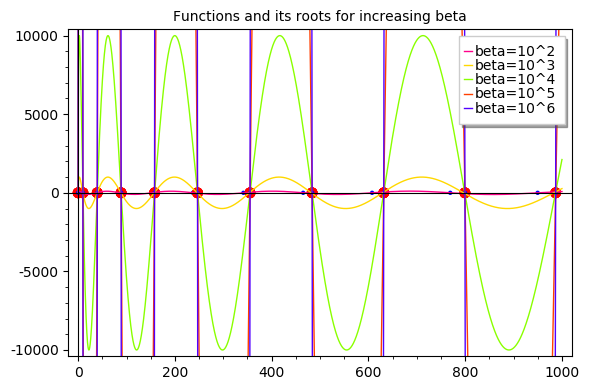}
  \caption{Functions \eqref{func.h.lam.k.be.1d} and roots for different values of $\beta$.}
  \label{fig:graf1}
\end{figure}
\begin{figure}[h!]
  \centering
  \includegraphics[width=0.7\linewidth]{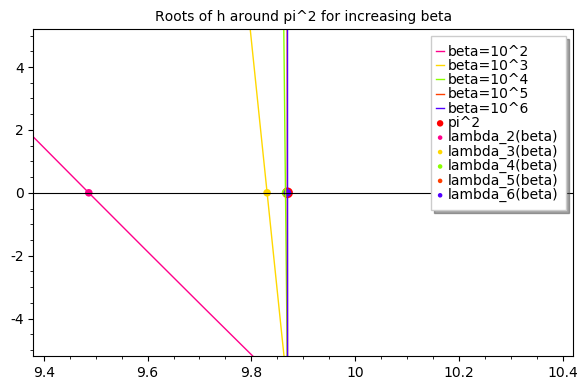}
  \caption{Close up of functions %\eqref{closeup.func.h.lam.k.be.1d} 
	for different values of $\beta$ around $\pi^2$.}
  \label{fig:graf2}
\end{figure}

%\includemovie{1cm}{1cm}{autof.gif}

For the entries in the above table, we have that $$\psi_{k}(\beta;x)=A_{k}(\beta)\left(\sin(\sqrt{\lambda_{k}(\beta)}x)+ \frac{\sqrt{\lambda_{k}(\beta)}}{\beta} \cos(\sqrt{\lambda_{k}(\beta)}x)\right),$$ where the coefficients $A_{k}(\beta)$ are such that the $\psi_{k}(\beta;\cdot)$'s are orthonormal in $L^2([0,1])$. By performing some straightforward calculations keeping in mind that \eqref{func.h.lam.k.be.1d} holds, we get that $$A_{k}(\beta)=\frac{\sqrt{2}\beta}{\sqrt{\beta^2+\beta+\lambda_{k}(\beta)}},$$ which, as expected, converges to $\sqrt{2}$ when $\beta$ goes to infinity.

Last but not least, for finding the appropiate coefficients $\left( u_{0},\psi_{k}(\beta;\cdot)\right)$ we resort to the initial value. Again, coupling these solutions provides us with the formal solutions
\begin{align*}
u_{\beta}(x,t)=& \sum\limits_{k=1}^{\infty} \left( u_{0},\psi_{k}(\beta;\cdot)\right) \times \\
& \times A_{k}(\beta) \left(\sin\sqrt{\lambda_{k}(\beta)}x+ \frac{\sqrt{\lambda_{k}(\beta)}}{\beta} \cos\sqrt{\lambda_{k}(\beta)}x \right) E_\al(-\lambda_{k}(\beta) t^\alpha),
\end{align*}
and, if we observe the explicit formulation for $u_{\beta}(x,t)$ when $\beta$ goes to infinity, it is clear that we obtain $u(x,t)$.

If we fix some more values and functions we will be able to have an idea of what these solutions look like. For the software implementation we will be considering $\alpha=0.8$, $u_{0}=\sin(\pi x)$ and $\beta=10^{l}$ for $l=1,2,3,4,5$. Of course, all of these values could be changed at will. We chose this initial data in order to have the solution to the D-IVBP-1d consisting of only one term, and the values of $\beta$ bigger than $10^4$ provided us with no visible changes on the outcome. The solution to the  D-IVBP-1d problem  is then $$u(x,t)= \sin(\pi x)E_\al(-\pi^2 t^\alpha).$$ Let us remark that in order to plot this function for fixed time $t=1$, we had to use the integral expression for the Mittag-Leffler functions given in Theorem 2.1  from \cite{GoLuLu:2002}, since from the definition only it seemed that the software brought precision loss due to numeric issues. Also, as it is expected, only the first term of the Fourier series defining $u_{\beta}$ has a visual impact, from the second one on it really doesn't affect the visualization. Figure \ref{fig:graf3} shows how the solutions of the $\beta$-IVBP-1d approaches the solution of the D-IVBP-1d.
\begin{figure}[h!]
  \centering
  \includegraphics[width=0.7\linewidth]{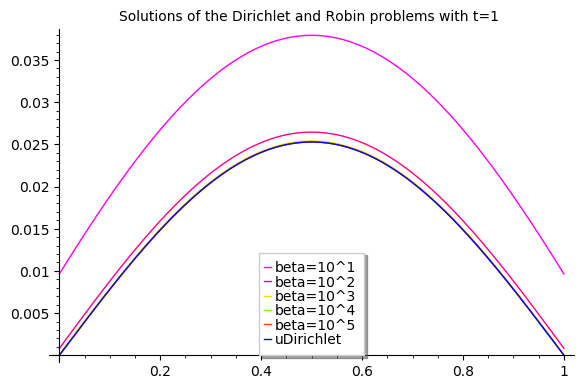}
  \caption{Solutions for the $\beta$-IVBP-1d problems vs. the solution for the D-IVBP-1d problem at time $t=1$.}
  \label{fig:graf3}
\end{figure}

\section{Conclusion}
We have proved existence and uniqueness of solutions to a family of  initial boundary value problems with Robin condition  for the FDE where  the parameter of the family was  the Newton heat transfer coefficient linked to the Robin condition on the boundary.The proofs where done following the Fourier approach and the convergence result lead us to a limit problem which is  the  initial boundary value problem for the FDE with an homogeneous Dirichlet condition. Finally we have visualized the previous results by considering a one-dimensional case with the aid of SageMath software.  \\

\section{Acknowledgements}
\noindent The authors thank Prof. Mashiro Yamamoto for his kindly and fruitful dicussion. 
\noindent The present work has been sponsored by the Projects ANPCyT PICTO Austral 2016 N$^\circ 0090$, Austral N$^\circ$ 006-INV00020 (Rosario, Argentina), European Unions Horizon 2020 research and innovation  programme under the Marie Sklodowska-Curie Grant Agreement N$^\circ$ 823731 CONMECH and SECyT-Univ. Nac. de Rosario.

\bibliographystyle{plain}

\bibliography{Roscani_BIBLIO_GENERAL_nombres_largos_2021}

\end{document}